\documentclass[11pt,a4paper]{article}
 \usepackage[latin9]{inputenc}
\usepackage{amsfonts}
\usepackage{amssymb}
\usepackage{amsthm}
\usepackage[centertags]{amsmath}
\usepackage{newlfont}
\usepackage{graphicx}

\newtheorem{theorem}{Theorem}[section]
\newtheorem{corollary}[theorem]{Corollary}
\newtheorem{definition}[theorem]{Definition}

\newtheorem{proposition}[theorem]{Proposition}
\newtheorem{remark}[theorem]{Remark}
\def\r{\mathbb R}

\setlength{\textwidth}{14cm}
\setlength{\oddsidemargin}{1cm}
\setlength{\evensidemargin}{1cm}
\setlength{\textheight}{20cm}
\setlength{\parskip}{2mm}
\setlength{\parindent}{0em}

\title{Invariant surfaces in Euclidean space with a log-linear density}
\author{Rafael L\'opez\footnote{Partially
supported by MEC-FEDER
 grant no. MTM2014-52368-P}\\
 Departamento de Geometr\'{\i}a y Topolog\'{\i}a\\ Instituto de Matem\'aticas (IEMath-GR)\\
 Universidad de Granada\\
 18071 Granada, Spain\\
\texttt{rcamino@ugr.es}}

\date{}

\begin{document}

\maketitle
\begin{abstract}
A $\lambda$-translating soliton  with density vector $\vec{v}$   is a surface in Euclidean space   whose mean curvature $H$ satisfies $2H=2\lambda+\langle N,\vec{v}\rangle$, where $N$ is the Gauss map.  We classify all $\lambda$-translating solitons that are invariant by a one-parameter group of translations and a one-parameter group of rotations.
\end{abstract}

\noindent {\it Keywords:} translating soliton, mean curvature, invariant surface, phase plane\\
{\it AMS Subject Classification:} 53A10,  53C44,  53C21, 53C42
\section{Introduction}
Fix a unit vector $\vec{v}$ in Euclidean space $\r^3$ and $\lambda$ a real number. In this paper we study orientable surfaces $\Sigma$ in $\r^3$ whose mean curvature $H$ satisfies
\begin{equation}\label{eq1}
H(p)=\lambda+\frac{\langle N(p),\vec{v}\rangle}{2},\quad p\in\Sigma,
\end{equation}
  where $N$ is the Gauss map of $\Sigma$. The interest of this equation is due to its relation with manifolds with density. Indeed, consider $\r^3$ with a  positive smooth density function $e^\phi$ , $\phi\in C^\infty(\r^3)$, which serves as a weight for  the volume and the surface area. The first variation  of the area $A_\phi$ with density $e^\phi$ under compactly supported variations and with variation vector field $\xi$ is
$$\frac{d}{dt}{\Big|}_{t=0}A_\phi(t)=-2\int_\Sigma  H_\phi\langle N,\xi\rangle  dA_\phi,$$
where $H_\phi=H-\frac12\frac{d\phi}{dN}$. Then it is immediate that  $\Sigma$ is a critical point of $A_\phi$ for a  given weighted volume if and only if $H_\phi$ is a constant function $H_\phi=\lambda$: see \cite{gr,mo}. In this paper we are interested in the log-linear density $e^\phi$ where
$$\phi:\r^3\rightarrow\r,\quad \phi(q)=\langle q,\vec{v}\rangle,$$
and $\vec{v}$ is a unit fixed vector of $\r^3$. Then $H_\phi=H-\langle N,\vec{v}\rangle /2$ and $H_\phi=\lambda$  is exactly (\ref{eq1}).

\begin{definition} A surface $\Sigma$ in $\r^3$ is called a   $\lambda$-translating soliton if Eq. (\ref{eq1}) holds everywhere. The vector $\vec{v}$   is called  the density vector.
\end{definition}

A particular case of (\ref{eq1}) is when $\lambda=0$,
because the equation $2H=\langle N,\vec{v}\rangle$ appears in the singularity theory of the mean curvature flow, indeed, it   is  the  equation of the limit flow by a proper blow-up procedure near type II singular points (\cite{hs,il,wa}). In the literature, a solution of $H_\phi=0$ is  called  a \emph{translating soliton} of the mean curvature flow. Equation \eqref{eq1} can viewed as a type of prescribed mean curvature equation, in fact,  and in a nonparametric form, the equation $H_\phi=0$   appeared in the classical article of Serrin  \cite[p. 477--478]{se} and it was studied  in the context of the maximum principle.

It is immediate that if we reverse the orientation on a $\lambda$-translating soliton, then we obtain a $-\lambda$-translating soliton. It is also clear that every rigid motion of $\r^3$ that leaves invariant the term $\langle N(p),\vec{v}\rangle$ in (\ref{eq1}) is a transformation that preserves the value of $H_\phi$. This occurs when we consider a translation, a rotation about a straight line parallel to $\vec{v}$ or a reflection about a plane parallel to $\vec{v}$.

Some examples of $\lambda$-translating solitons are:
\begin{enumerate}
\item The case $\lambda=0$ has been widely studied in the literature. Some explicit examples of translating solitons are: a  plane parallel to $\vec{v}$, the grim reaper (a surface of translation type, see Sec. \ref{stran}) and the bowl soliton (a rotational surface). We refer to the reader the next references without to be a complete list: \cite{aw,css,ha,mar,mh,sh,sm,wa}.
\item A plane orthogonal to $\vec{v}$ is a $1/2$-translating soliton.
\item A circular cylinder of radius $r>0$ whose axis is parallel to $\vec{v}$ is a $1/(2r)$-translating soliton.
\end{enumerate}

In this paper we consider $\lambda$-translating solitons that are invariant by a one-parameter group of translations and a one-parameter group of rotations. In the first case, the group is characterized by the translation vector $\vec{a}$ and in the second one, by the rotation axis $\vec{a}$. We point out there is not an {\it a priori} relation between the direction $\vec{a}$ and the density vector $\vec{v}$ and a purpose of this paper is to study both types of invariant $\lambda$-translating solitons   in all its generality. We now give an approach to both  settings.

Invariant surfaces by a one-parameter group of translations are related with the one-dimensional problem of Eq. (\ref{eq1}) by  considering a planar curve $\gamma:I\rightarrow\r^2$,  $\gamma=\gamma(s)$, whose curvature $\kappa$ satisfies
\begin{equation}\label{e-one}
\kappa(s)=\lambda +\langle{\textbf n}(s),\vec{w}\rangle,\qquad s\in I,
\end{equation}
where $\vec{w}$ is a fixed vector of $\r^2$ and ${\textbf n}(s)$ is the principal unit normal vector of $\gamma$. Examples of  solutions of   (\ref{e-one}) are straight lines parallel to $\vec{w}$ ($\lambda=0$) and straight lines orthogonal to $\vec{w}$  ($\lambda=1$).  With a solution $\gamma$ of (\ref{e-one}), we can construct  a $\lambda$-translating soliton in $\r^3$ invariant by a group of translations as follows. If  $(x,y,z)$ stand for the usual coordinates of $\r^3$, we place  $\gamma$  in the $yz$-plane and consider the cylindrical surface  $\Sigma_\gamma=\gamma(I)\times \r e_1 $, where $e_1=(1,0,0)$. Then it is immediate that $\Sigma_\lambda$ is a $\lambda/2$-translating soliton with density vector $\vec{v}=(0,\vec{w})$ and $\Sigma_\lambda$ is invariant by the group of translations generated by $e_1$.

In general, a surface $\Sigma$  invariant by a one-parameter group of translations   can be parametrized as  $X(s,t)=\alpha(s)+t \vec{a}$, where $\alpha$ is a planar curve and $\vec{a}$ is a unit vector orthogonal to the plane containing $\gamma$ and it is called a \emph{cylindrical surface}. In the above example $\Sigma_\gamma$, the translation vector $\vec{a}$ is orthogonal to the density vector $\vec{v}$.   In Sec. \ref{scyl} we study all $\lambda$-translating solitons that are cylindrical surfaces, obtaining in Th. \ref{c1} a complete classification of these surfaces. This classification depends on the value of $\lambda$ and the vectors $\vec{a}$ and $\vec{v}$. We point out here that for a certain range of values of $\lambda$, there exist entire convex  surfaces. In Sec. \ref{stran} we extend the notion of cylindrical surface studying $\lambda$-translating solitons which are the sum of two planar curves of $\r^3$ contained in orthogonal planes and we classify these surfaces in Th. \ref{t31}.

In Sec. \ref{srot} we study $\lambda$-translating solitons invariant by a one-parameter group of rotations and    Ths. \ref{trot} and \ref{trot2}   give a complete classification of the  rotational $\lambda$-translating solitons. Firstly, we prove that the rotational axis must parallel to the density vector. Next, we prove in Th. \ref{tclosed} that there no exist closed $\lambda$-translating solitons, in particular,    there are not closed rotational examples. Among the examples that appear in our classification, we point out that for some range of $\lambda$, there exist  embedded surfaces that meet the rotational axis orthogonally which are  asymptotic to   right circular cylinders. We also  find convex entire graphs. Finally, in Sect. \ref{sriemann} we prove that if a   $\lambda$-translating soliton is  foliated by circles in parallel planes orthogonal to the density vector, then the surface must be a surface of revolution.

\section{ Cylindrical translating solitons}\label{scyl}

In this section we study $\lambda$-translating solitons invariant by a one-parameter group of translations $G=\{M_t:t\in\r\}$, where $M_t$ is the translation $M_t(p)=p+t\vec{a}$, $p\in\r^3$ and   $|\vec{a}|=1$.  A surface   $\Sigma\subset \r^3$ invariant by a such group is said   a \emph{cylindrical surface}. It follows from the definition that  a global parametrization of $\Sigma$ is $X(s,t)=\alpha(s)+t\vec{a}$, $s\in I$, $t\in\r$, where $\alpha:I\rightarrow\r^3$ is a curve whose trace is contained in a plane orthogonal to $\vec{a}$. The mean curvature of $\Sigma$ is $H(X(s,t))=\kappa(s)/2$, where $\kappa$ is the curvature of $\alpha$. Therefore Eq. (\ref{eq1}) is
\begin{equation}\label{hk}
 \kappa(s)=2\lambda+\mbox{det}(\alpha'(s),\vec{a},\vec{v}).
\end{equation}
 It is immediate that if $\alpha$ is a straight line with direction $\vec{w}$, then $\alpha$ satisfies (\ref{hk}) with $2\lambda=-\mbox{det}(\vec{w},\vec{a},\vec{v})$. Also, if $\vec{a}$ is parallel to $\vec{v}$, then (\ref{hk}) is equivalent to $\kappa=2\lambda$ is constant and thus, $\alpha$ is a straight line or $\alpha$ is a circle of radius $1/(2\lambda)$. We collect these cases:
\begin{proposition}\label{fc}
\begin{enumerate}
\item A   plane is a $\lambda$-translating soliton of cylindrical type for any density vector.
\item Planes and  right circular cylinders are the only $\lambda$-translating solitons of cylindrical type whose rulings are parallel to the density vector.
\end{enumerate}
\end{proposition}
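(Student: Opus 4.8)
The plan is to push everything through the reduced equation (\ref{hk}), so that the Proposition becomes a statement about planar curves rather than about surfaces. \emph{For part (1)}, I would first observe that any plane $\Pi$ can be presented as a cylindrical surface with an arbitrary prescribed ruling direction lying in it: fix a unit vector $\vec{a}\in\Pi$, pick a point $p_0\in\Pi$, and let $\alpha$ be the straight line $\Pi\cap Q$, where $Q$ is the plane through $p_0$ orthogonal to $\vec{a}$; then $X(s,t)=\alpha(s)+t\vec{a}$ parametrizes $\Pi$ and its profile curve $\alpha$ has zero curvature. Writing $\vec{w}=\alpha'$ (a constant unit vector), the right-hand side of (\ref{hk}) equals $2\lambda+\det(\vec{w},\vec{a},\vec{v})$, which is constant, so (\ref{hk}) holds precisely when $2\lambda=-\det(\vec{w},\vec{a},\vec{v})$. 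Since $\vec{v}$ was arbitrary, this exhibits every plane as a $\lambda$-translating soliton of cylindrical type for a suitable $\lambda$ and every density vector. (As a consistency check, $\det(\vec{w},\vec{a},\vec{v})=\langle \vec{w}\times\vec{a},\vec{v}\rangle=\pm\langle N_0,\vec{v}\rangle$ with $N_0$ a unit normal of $\Pi$, so the value of $\lambda$ agrees with what one reads directly from (\ref{eq1}) using $H\equiv0$.)

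\emph{For part (2)}, let $\Sigma$ be a cylindrical $\lambda$-translating soliton whose rulings are parallel to $\vec{v}$, i.e.\ $\vec{a}=\pm\vec{v}$. In the determinant appearing in (\ref{hk}) two of the three columns are then proportional, hence $\det(\alpha'(s),\vec{a},\vec{v})\equiv 0$ and (\ref{hk}) degenerates to $\kappa(s)=2\lambda$; the profile curve has constant curvature. By the elementary classification of planar curves of constant curvature, $\alpha$ is (an open piece of) a straight line when $\lambda=0$ and an arc of a circle of radius $1/(2|\lambda|)$ when $\lambda\neq 0$. Translating $\alpha$ along $\vec{a}$ then gives, respectively, a plane or a right circular cylinder of radius $1/(2|\lambda|)$ with axis parallel to $\vec{v}$. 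The converse is immediate: planes parallel to $\vec{v}$ and right circular cylinders with axis parallel to $\vec{v}$ have rulings parallel to $\vec{v}$, and they are $\lambda$-translating solitons by part (1) and by example (3) of the Introduction, respectively.

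I do not expect a genuine obstacle here: this is exactly the degenerate case of the cylindrical problem in which the transport term $\det(\alpha',\vec{a},\vec{v})$ of (\ref{hk}) vanishes identically, so the ODE collapses to $\kappa=\mathrm{const}$. The only points needing a little attention are the presentation of an arbitrary plane as a cylindrical surface with a chosen ruling direction and the bookkeeping of signs in the value of $\lambda$. The substantive work — the phase-plane analysis of (\ref{hk}) when $\vec{a}$ and $\vec{v}$ are neither parallel nor orthogonal — belongs to Theorem \ref{c1} rather than to this Proposition.
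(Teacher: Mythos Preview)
Your argument is correct and matches the paper's own reasoning, which appears in the paragraph immediately preceding the Proposition: straight-line profiles give planes with $2\lambda=-\det(\vec{w},\vec{a},\vec{v})$, and parallelism of $\vec{a}$ and $\vec{v}$ kills the determinant term in (\ref{hk}), reducing it to $\kappa=2\lambda$. You have simply written out in full what the paper states in two sentences and collects without further proof.
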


Other particular case is   $\lambda=0$, that is, $\Sigma$ is a translating soliton. It is known that when $\vec{a}$ is orthogonal to $\vec{v}$, then the only cylindrical translating soliton is a   plane parallel to $\vec{a}$ and the   grim reaper  \cite{mh}. A parametrization of this surface for $\vec{a}=(1,0,0)$ and $\vec{v}=(0,0,1)$ is $X(s,t)=(t,s,-\log|\cos(s)|)$.

We now solve (\ref{hk}) in all its generality. Up to a change of coordinates, we take $\vec{a}=(1,0,0)$ and after a rotation about $\vec{a}$, we suppose  $\vec{v}=(v_1,0,v_3)$, with $v_3\geq 0$, $|\vec{v}|=1$. Then the parametrization  of $\Sigma$ is
$X(s,t)=(t,y(s),z(s))$, where $s\in I, t\in\r$ and  the curve $\alpha(s)=(0,y(s),z(s))$ is assumed to be parametrized by arc-length. Denote $\theta$ the angle that makes the velocity
$\alpha'(s)$ with the $y$-axis and let $y'(s)=\cos\theta(s)$ and
$z'(s)=\sin\theta(s)$ for a certain   function
$\theta$. The derivative $\theta'(s)$  is just
the  curvature $\kappa$ of $\alpha$. If $\Sigma$ is oriented with respect to the Gauss map $N(t,s)=(0,\sin\theta(s),-\cos\theta(s))$, then  \eqref{hk} is equivalent to
\begin{equation}\label{1}
 \left\{\begin{array}{lll}
 y'(s)&=& \displaystyle \cos\theta(s)\\
 z'(s)&=&\displaystyle \sin\theta(s)\\
 \theta'(s)&=&2\lambda+v_3\cos\theta(s).
\end{array}
\right.
\end{equation}

After a translation in the $yz$-plane we suppose that the initial conditions are
\begin{equation}\label{111}
y(0)=0,\ \ z(0)=0,\ \ \theta(0)=\theta_0,
\end{equation}
and denote $\{y(s;\lambda,\theta_0),z(s;\lambda,\theta_0),\theta(s;\lambda,\theta_0)\}$ the solution of \eqref{1}-\eqref{111}. It is immediate that the solutions   are defined in $\r$ because the  derivatives $y',z'$ and $\theta'$ are bounded. By Prop. \ref{fc}, we now assume that the function $\theta$ is not constant and that $v_3>0$. The solutions of (\ref{1}) satisfy the next symmetric properties:

\begin{proposition}\label{l1}  Let $\alpha(s)=(y(s),z(s))$ be a solution of (\ref{1}).
\begin{enumerate}
\item If the curvature of $\alpha$ vanishes at some point, then $\alpha$ is   a straight line.
\item If the tangent vector of $\alpha$ is horizontal at some point $s_0$, then the graphic of $\alpha$   is symmetric with respect to the vertical straight line $y=y(s_0)$.
\end{enumerate}
\end{proposition}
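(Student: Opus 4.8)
The plan is to deduce both statements from the uniqueness theorem for the ODE system (\ref{1}), whose right-hand side is smooth and, because $|\cos\theta|,|\sin\theta|\le 1$, globally Lipschitz in $(y,z,\theta)$; hence the Cauchy problem for (\ref{1}) has a unique solution, defined on all of $\r$.

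\emph{Part (1).} The third equation of (\ref{1}) is the autonomous equation $\theta'=f(\theta)$ with $f(\theta)=2\lambda+v_3\cos\theta$, decoupled from $y$ and $z$. If the curvature $\kappa=\theta'$ vanishes at some $s_1$, then $f(\theta(s_1))=0$, i.e.\ $\theta(s_1)$ is an equilibrium of $\theta'=f(\theta)$, so the constant function $s\mapsto\theta(s_1)$ solves the same Cauchy problem as $\theta$; by uniqueness $\theta$ is constant, hence $\kappa\equiv 0$ and $\alpha$ is a straight line.

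\emph{Part (2).} Suppose the tangent vector of $\alpha$ is horizontal at $s_0$, so $\sin\theta(s_0)=0$. I would introduce the reflected triple
\begin{equation*}
\bar y(s)=2y(s_0)-y(2s_0-s),\qquad \bar z(s)=z(2s_0-s),\qquad \bar\theta(s)=-\theta(2s_0-s).
\end{equation*}
Differentiating, and using that $\cos$ is even and $\sin$ is odd, one checks $\bar y'=\cos\bar\theta$, $\bar z'=\sin\bar\theta$, and $\bar\theta'(s)=\theta'(2s_0-s)=2\lambda+v_3\cos\bar\theta(s)$, so $(\bar y,\bar z,\bar\theta)$ is again a solution of (\ref{1}). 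At $s=s_0$ we have $\bar y(s_0)=y(s_0)$, $\bar z(s_0)=z(s_0)$, while $\bar\theta(s_0)=-\theta(s_0)$. Uniqueness then forces $\bar y\equiv y$ and $\bar z\equiv z$ on $\r$, i.e.\ $y(2s_0-s)=2y(s_0)-y(s)$ and $z(2s_0-s)=z(s)$; this says the reflection of $\alpha(s)$ in the line $y=y(s_0)$ is the point $\alpha(2s_0-s)$, so the trace of $\alpha$ is symmetric about that line.

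The argument is essentially soft; the only point to watch is the matching of the angular data in Part (2): a priori $\bar\theta(s_0)=-\theta(s_0)$, not $\theta(s_0)$. Here the horizontality hypothesis is exactly what is needed: $\sin\theta(s_0)=0$ gives $\theta(s_0)\in\pi\z$, so $-\theta(s_0)$ and $\theta(s_0)$ differ by a multiple of $2\pi$, and since (\ref{1}) sees $\theta$ only through $\cos\theta$ one may add that multiple of $2\pi$ to $\bar\theta$ without leaving the solution set, arranging $\bar\theta(s_0)=\theta(s_0)$ exactly before invoking uniqueness.
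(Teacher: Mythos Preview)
Your proof is correct and follows essentially the same strategy as the paper: for Part~(1) you exhibit the constant-angle solution and invoke uniqueness (you streamline this slightly by noting the $\theta$-equation is autonomous and decoupled), and for Part~(2) you construct the reflected solution $(\bar y,\bar z,\bar\theta)$ and again invoke uniqueness, just as the paper does. Your handling of the angular mismatch $\bar\theta(s_0)=-\theta(s_0)$ via the $2\pi$-periodicity is in fact a bit more explicit than the paper's, which normalizes to $\theta(0)\in\{0,\pi\}$ and treats the two cases separately.
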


\begin{proof}
\begin{enumerate}
\item Suppose $s=0\in I$ is a point where the curvature of $\alpha$ vanishes, that is, $\theta'(0)=0$. Let $\theta_0=\theta(0)$. By the third equation in (\ref{1}),  we have $2\lambda=-v_3 \cos\theta_0$. The functions $\{\bar{y},\bar{z},\bar{\theta}\}$ defined as
\begin{eqnarray*}
&&\bar{y}(s)=(\cos\theta_0)s+y(0)\\
&&\bar{z}(s)=(\sin\theta_0)s+z(0)\\
&&\bar{\theta}(s)=\theta_0
\end{eqnarray*}
satisfy \eqref{1} with the same initial conditions at $s=0$ that $\{y,z,\theta\}$. By uniqueness of ODE, $\{y,z,\theta\}=\{\bar{y},\bar{z},\bar{\theta}\}$, proving the result.
 \item After a change in the parameter, suppose $s_0=0$. Since $\alpha'(0)$ is horizontal, then  up to an integer multiply of $2\pi$, we   have $\theta(0)=0$ or  $\theta(0)=\pi$.  Suppose $\theta(0)=0$ (similarly for  the other case). Then the functions $\{y,z,\theta\}$ satisfy \eqref{1} with initial conditions $(0,0,0)$. Define
\begin{eqnarray*}
\bar{y}(s)&=&-y(-s)+2y(0)\\
\bar{z}(s)&=& z(-s)\\
\bar{\theta}(s)&=&-\theta(-s).
\end{eqnarray*}
Then it is immediate that $\{\bar{y},\bar{z},\bar{\theta}\}$ satisfy \eqref{1} with the same initial conditions at $s=0$, and thus $\{\bar{y}(s),\bar{z}(s)\}=\{y(s),z(s)\}$, proving the result.
\end{enumerate}
\end{proof}

We relate the shape of a $\lambda$-translating cylindrical surface when we change the sign of $\lambda$. We point out that the computation of $\lambda$ in (\ref{1}) was obtained by fixing an orientation on $\Sigma$ and thus we have to consider any  value of $\lambda$. In the next result, we prove that  the graphics of a solution of  \eqref{1} for $\lambda$ and $-\lambda$ coincide up to  reparametrizations.

 \begin{proposition}\label{pr25} For suitable initial conditions in (\ref{111}), the graphic of a solution of (\ref{1}) for $\lambda$  coincides with the graphic of a solution of (\ref{1}) for $-\lambda$ and  we have
 $$(y(s;\lambda,\theta_0),z(s;\lambda,\theta_0)\}=\{-y(s;-\lambda,\pi-\theta_0), z(s;-\lambda,\pi-\theta_0)),\ s\in\r.$$
 \end{proposition}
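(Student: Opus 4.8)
The plan is to verify directly that the proposed functions satisfy the ODE system \eqref{1} with $-\lambda$ in place of $\lambda$, and then invoke uniqueness of solutions. Concretely, starting from a solution $\{y(s),z(s),\theta(s)\}=\{y(s;\lambda,\theta_0),z(s;\lambda,\theta_0),\theta(s;\lambda,\theta_0)\}$ of \eqref{1}--\eqref{111}, I would define
\begin{eqnarray*}
\bar{y}(s)&=&-y(s)\\
\bar{z}(s)&=&z(s)\\
\bar{\theta}(s)&=&\pi-\theta(s),
\end{eqnarray*}
and check that $\{\bar{y},\bar{z},\bar{\theta}\}$ solves \eqref{1} with $\lambda$ replaced by $-\lambda$. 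The computations are routine trigonometry: $\bar{y}'(s)=-y'(s)=-\cos\theta(s)=\cos(\pi-\theta(s))=\cos\bar{\theta}(s)$; $\bar{z}'(s)=z'(s)=\sin\theta(s)=\sin(\pi-\theta(s))=\sin\bar{\theta}(s)$; and $\bar{\theta}'(s)=-\theta'(s)=-2\lambda-v_3\cos\theta(s)=2(-\lambda)+v_3\cos(\pi-\theta(s))=2(-\lambda)+v_3\cos\bar{\theta}(s)$, using that $v_3$ is unchanged since we are only reversing the sign of $\lambda$. Thus $\{\bar{y},\bar{z},\bar{\theta}\}$ is exactly the solution of \eqref{1} for $-\lambda$.

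Next I would pin down the initial conditions. At $s=0$ we have $\bar{y}(0)=-y(0)=0$, $\bar{z}(0)=z(0)=0$, and $\bar{\theta}(0)=\pi-\theta(0)=\pi-\theta_0$. Hence $\{\bar{y},\bar{z},\bar{\theta}\}$ is the solution of \eqref{1} for the parameter $-\lambda$ with initial conditions \eqref{111} where $\theta_0$ is replaced by $\pi-\theta_0$; in the notation of the statement, $\bar{y}(s)=y(s;-\lambda,\pi-\theta_0)$ and $\bar{z}(s)=z(s;-\lambda,\pi-\theta_0)$. Reading off the definitions of $\bar{y}$ and $\bar{z}$ then gives $y(s;\lambda,\theta_0)=-y(s;-\lambda,\pi-\theta_0)$ and $z(s;\lambda,\theta_0)=z(s;-\lambda,\pi-\theta_0)$, which is the claimed identity. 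Since $\bar{y}$ is obtained from $y$ by a reflection across the $z$-axis and $\bar{z}=z$, the two graphics coincide up to that reflection, as asserted.

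There is essentially no serious obstacle here: the only thing to be careful about is bookkeeping with the sign conventions — that reversing $\lambda$ does not change $v_3$ (which is fixed by the normalization $\vec{v}=(v_1,0,v_3)$, $v_3\geq 0$, before any discussion of $\lambda$), and that the reflection in the $y$-variable is the one that makes the $\cos\theta$ term flip sign while keeping $\sin\theta$ and hence $z'$ intact. One should also note the solutions are globally defined on $\r$ (as already observed after \eqref{111}, because $y'$, $z'$, $\theta'$ are bounded), so the identity holds for all $s\in\r$ and not merely on a neighborhood of $0$.
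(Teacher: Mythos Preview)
Your proof is correct and takes essentially the same approach as the paper: both arguments use the transformation $(y,z,\theta)\mapsto(-y,z,\pi-\theta)$ and verify via uniqueness of ODE that it interchanges solutions for $\lambda$ and $-\lambda$ with the stated initial conditions. The only cosmetic difference is that the paper starts from the $(-\lambda,\pi-\theta_0)$ solution and shows the transformed triple solves the $\lambda$ system, whereas you go in the opposite direction; your write-up is in fact more explicit in checking the three equations than the paper's.
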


 \begin{proof} The functions
 \begin{eqnarray*}
\bar{y}(s)&=&-y(s;-\lambda,\pi-\theta_0),\\
\bar{z}(s)&=& z(s;-\lambda,\pi-\theta_0),\\
\bar{\theta}(s)&=&\pi-\theta(s;-\lambda,\pi-\theta_0),
\end{eqnarray*}
 satisfy \eqref{1} with $(0,0,\theta_0)$ as initial conditions at $s=0$, proving  the result.
 \end{proof}

The next result classifies all non-planar $\lambda$-translating solitons that are cylindrical surfaces whose density vector is non-parallel to the rulings.

 \begin{theorem}\label{c1}  Let $\alpha(s)=(y(s;\lambda),z(s;\lambda))$ be a solution of (\ref{1})-(\ref{111}) which is not a straight line. After a change of the initial conditions in (\ref{111}), and by Prop. \ref{pr25}, we   assume    $\lambda\geq 0$ and $\vec{v}=(v_1,v_2,v_3)$, $v_3>0$. Then we have:
  \begin{enumerate}
 \item Case $\lambda>v_3/2$. The graphic of $\alpha$    is invariant by a discrete group of horizontal translations in the $yz$-plane and the angle function $\theta$ rotates infinitely times around the origin. See Fig. \ref{f1}, left.
 \item Case $\lambda=v_3/2$.  The graphic of $\alpha$ has one self-intersection point and it is symmetric about a vertical line. See Fig. \ref{f1}, right.
 \item Case $0<\lambda<v_3/2$.  The graphic of $\alpha$ has two branches asymptotic to two lines  of slopes $\pm\tan\theta_1$, where $2\lambda+\cos\theta_1=0$. Depending on the initial values, we have:
 \begin{enumerate}
 \item The curve $\alpha$ is symmetric about a vertical line, it has one point of self intersection. See Fig. \ref{f2}, left.
 \item The curve $\alpha$ is a convex graph on the $y$-real line. See Fig. \ref{f2}, middle.

 \end{enumerate}

 \item Case $\lambda=0$. The solution is  the grim reaper. See Fig. \ref{f2}, right.
 \end{enumerate}
\end{theorem}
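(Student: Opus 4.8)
\emph{Plan of proof.} The plan is to reduce the whole problem to the scalar autonomous equation $\theta'=f(\theta)$ with $f(\theta):=2\lambda+v_3\cos\theta$ (the third equation of (\ref{1})), and then recover $\alpha$ from two first integrals. Differentiating and using (\ref{1}) one gets $v_3y'=\theta'-2\lambda$ and $v_3z'=-\frac{d}{ds}\log|f(\theta)|$, so with the normalization (\ref{111}) and $y(0)=z(0)=0$,
\begin{equation*}
y(s)=\frac{1}{v_3}\bigl(\theta(s)-\theta_0-2\lambda s\bigr),\qquad z(s)=-\frac{1}{v_3}\log\Bigl|\frac{f(\theta(s))}{f(\theta_0)}\Bigr| .
\end{equation*}
Hence the geometry of $\alpha$ is completely determined by the scalar function $\theta(s)$. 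By Prop. \ref{l1}(1) the curve is not a straight line exactly when $\theta_0$ is not a zero of $f$; in that case $\theta$ is strictly monotone on its maximal interval of values $(\theta_-,\theta_+)$, whose endpoints are consecutive zeros of $f$ — or $\pm\infty$ when $f$ has none — and, since the solution is defined on all of $\r$, $\theta(s)\to\theta_\pm$ as $s\to\pm\infty$. Thus the discussion splits according to the number of zeros of $f$, i.e. the position of $2\lambda$ relative to $v_3$.

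If $\lambda>v_3/2$ then $f>0$ on $\r$, so $\theta$ increases monotonically over all of $\r$; a $2\pi$-increment of $\theta$ takes the constant $s$-time $\Delta s=\int_0^{2\pi}\frac{d\theta}{f(\theta)}=\frac{2\pi}{\sqrt{4\lambda^2-v_3^2}}$, whence $\theta(s+\Delta s)=\theta(s)+2\pi$. The formulas above then give that $z$ is $\Delta s$-periodic and $y(s+\Delta s)-y(s)=\frac{2\pi}{v_3}\bigl(1-\frac{2\lambda}{\sqrt{4\lambda^2-v_3^2}}\bigr)$, a nonzero constant; so the graphic is invariant under the corresponding horizontal translation and $\theta$ winds infinitely (case (1)). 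If $\lambda=v_3/2$ then $f(\theta)=2v_3\cos^2(\theta/2)$ has a double zero, $\theta\colon\r\to(-\pi,\pi)$ is increasing, both logarithms blow up so $z\to+\infty$ at the two ends, and $y\to\mp\infty$. The tangent is horizontal at the unique parameter with $\theta=0$; normalizing it to $s=0$, Prop. \ref{l1}(2) gives the vertical symmetry and $\theta,y$ become odd and $z$ even. On each half-line $z'=\sin\theta$ has constant sign, so each half of $\alpha$ is a graph over $z$, hence embedded, and two points of the two halves coincide only when $z$ and $y$ agree, which by parity forces them to be $\pm s_1$ with $y(s_1)=0$, i.e. $g(s_1)=0$ where $g(s):=\theta(s)-v_3s$; since $g(0)=0$, $g'=v_3\cos\theta$ is positive for $\theta\in(0,\pi/2)$ and negative afterwards, and $g\to-\infty$, $g$ has exactly one positive zero, so $\alpha$ has exactly one self-intersection (case (2)).

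For $0<\lambda<v_3/2$ the function $f$ has the two simple zeros $\pm\theta_1$ with $\cos\theta_1=-2\lambda/v_3$, $\theta_1\in(\pi/2,\pi)$, and there are two regimes. If $\theta_0\in(-\theta_1,\theta_1)$ then $\theta\colon\r\to(-\theta_1,\theta_1)$ is increasing; near each simple zero the quantity $|\theta-(\pm\theta_1)|$ decays exponentially in $s$ (linearize $\theta'=f(\theta)$), so $y$ and $z$ are affine in $s$ up to exponentially small terms and $\alpha$ has two branches asymptotic to honest straight lines of slopes $\pm\tan\theta_1$; the symmetry at $\theta=0$ together with the same graph-over-$z$ and zero-counting argument as in case (2) gives exactly one self-intersection (case (3a)). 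If $\theta_0\in(\theta_1,2\pi-\theta_1)$ then $\theta\colon\r\to(\theta_1,2\pi-\theta_1)$ is decreasing and $\cos\theta<0$ throughout, so $y'=\cos\theta$ has constant sign and $\alpha$ is a graph $z=z(y)$; moreover $y\sim-\frac{2\lambda}{v_3}s$ at the ends so $y$ covers all of $\r$, and $\frac{d^2z}{dy^2}=\frac{\theta'}{\cos^3\theta}>0$, so the graph is convex, again with the asymptotic lines of slopes $\pm\tan\theta_1$ (case (3b)). Finally for $\lambda=0$, $f=v_3\cos\theta$ has the simple zeros $\pm\pi/2$, $\theta\colon\r\to(-\pi/2,\pi/2)$, so $y$ stays bounded while $z\to+\infty$ at the two ends, and eliminating $\theta$ gives $z=-\frac{1}{v_3}\log|\cos(v_3y+c_0)|+c_1$: this is the grim reaper (case (4)).

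The integrations yielding the two first integrals and the standard computation of $\Delta s$ are routine; the delicate point is the self-intersection count in cases (2) and (3a), which rests on the two observations above — embeddedness of each symmetric half from the constant sign of $z'=\sin\theta$ there, and unimodality of $g(s)=\theta(s)-2\lambda s$ on $(0,\infty)$. One should also check that the approach to a simple zero of $f$ is exponential, so that $\alpha$ is asymptotic to a genuine line and not merely to a direction; this follows from linearizing $\theta'=f(\theta)$ at $\pm\theta_1$ (resp. $\pm\pi/2$).
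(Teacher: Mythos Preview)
Your proof is correct and, in several places, more careful than the paper's own argument, but it proceeds along a somewhat different route.

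\textbf{Comparison with the paper.} The paper works directly with the ODE system~(\ref{1}) case by case: in case~(1) it uses $\theta'>2\lambda-v_3>0$ and the uniqueness of solutions to ODEs to deduce the translation invariance; in case~(2) it integrates only the $y$-equation to get $y(s)=\theta(s)/v_3-s$ and argues rather informally that there is ``a point of self-intersection''; in case~(3) it splits according to whether $\theta_0$ lies in $J_1=(-\theta_1,\theta_1)$ or $J_2=(\theta_1,2\pi-\theta_1)$ and checks convexity via $z''(y)=\theta'/\cos^3\theta$, asserting without proof that the two branches are asymptotic to lines of slope $\pm\tan\theta_1$. You instead derive both first integrals $y=(\theta-\theta_0-2\lambda s)/v_3$ and $z=-\tfrac{1}{v_3}\log|f(\theta)/f(\theta_0)|$ at the outset and read off the geometry from the scalar flow $\theta'=f(\theta)$. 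This buys you: an explicit period $\Delta s=2\pi/\sqrt{4\lambda^2-v_3^2}$ and translation vector in case~(1); a clean proof that $z\to+\infty$ at the ends in cases~(2) and~(3a); and, via the linearization of $f$ at its simple zeros, a genuine proof that the approach to the asymptotic lines is exponential, hence that they are honest lines and not merely asymptotic directions --- a point the paper leaves unaddressed. Your self-intersection count (embeddedness of each symmetric half from the sign of $z'=\sin\theta$, plus unimodality of $g(s)=\theta(s)-2\lambda s$) is also sharper than the paper's, which only exhibits existence of a self-intersection and does not argue uniqueness. The paper's approach has the virtue of being slightly more elementary (no first integrals, just monotonicity and ODE uniqueness), but your systematic reduction to the scalar equation is cleaner and yields more information with less ad hoc work.
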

\begin{proof}

Since $\alpha$ is not a straight line, Prop. \ref{l1} asserts that the derivative of $\theta$ cannot vanish, and thus $\theta=\theta(s)$ is a monotone function. The monotonicity of $\theta$ is given by the value $\theta'(0)=2\lambda+v_3 \cos\theta_0$.
\begin{enumerate}
 \item Case $\lambda>v_3/2$.  Then $\theta'=2\lambda+v_3\cos\theta>1$ and thus $\theta$ is increasing and $\lim_{s\rightarrow\pm\infty}\theta(s)=\pm\infty$, which   proves the second part of the statement. Moreover, $\theta$ takes all real values and it suffices to assume $\theta_0=0$ by Prop. \ref{pr25}. Let $T>0$ be the unique number such that $\theta(T)=2\pi$ and denote $y_0=y(T)$. The uniqueness of ODE. gives immediately
$$(y(s+T;\lambda,0),  z(s+T;\lambda,0))=(y(s;\lambda,0)+y(T),z(s;\lambda,0)),$$
with $\theta(s+T)=\theta(s)+2\pi$.  This proves that the graphic of $\alpha$ is invariant by the group of translation of $\r^2$ generated by the  vector $(y(T),0)$.
\item Case $\lambda=v_3/2$. As $\theta'=v_3(1+\cos\theta)$, then $\theta$ is a monotone increasing function  that can not attain the values $\pm\pi$.  After a change in the parameter $s$, we suppose $\theta(0)=0$, $s=0$ is the only point where $\theta$ vanishes and $\lim_{s\rightarrow\pm\infty}\theta(s)=\pm \pi$.  Thus the graphic of $\alpha$ is horizontal at $s=0$ and Prop. \ref{l1} applies.  As there exist two values $s$ such that $y'(s)=\cos\theta(s)=0$, then $\alpha$ is not a graph on the $y$-axis. Since $z'(0)=0$ and $z''(0)=\theta'(0)=2v_3>0$, the graphic is symmetric about the line $y=0$ with a minimum at $s=0$. Finally, for $s>0$,
$$y(s)=\int_0^s\cos\theta(t)dt=\int_0^s\left(\frac{\theta'(t)}{v_3}-1\right)dt=\frac{\theta(s)}{v_3}-s.$$
By symmetry, we have $\lim_{s\rightarrow\pm\infty}y(s)=\mp\infty$ and since $\theta$ moves from the value $-\pi$ to $\pi$, the graphic of $\alpha$ has a   point self intersection.
\item Case $0<\lambda<v_3/2$.  Since $\theta'=2\lambda+v_3\cos\theta\neq 0$ for all $s\in\r$, the range of $\theta$ is the union of two open intervals $J_1\cup J_2$, namely,
$$J_1=(-\cos^{-1}(-2\lambda/v_3),\cos^{-1}(-2\lambda/v_3)),$$
$$J_2=\left(\cos^{-1}(-2\lambda/v_3),2\pi-\cos^{-1}(-2\lambda/v_3)\right).$$
The behavior of a solution of (\ref{1}) depends if the value $\theta_0$ in (\ref{111})  lies in $J_1$ or in $J_2$.
\begin{enumerate}
\item Case  $\theta_0\in J_1$. Without loss of generality, we suppose $\theta_0=0$. In particular, the graphic of $\alpha$ is symmetric about the line $y=0$ and,  similar as in the case (2), the graphic of $\alpha$ has a minimum at $s=0$ with a  point of  self intersection.
\item Case  $\theta_0\in J_2$. Without loss of generality, we suppose $\theta_0=\pi$. Then  $\theta'<0$, that is, $\theta$ is an decreasing function. As the range of $\theta$  is   $J_2$, then    $y'(s)\not=0$ for any $s$ and this means that $\alpha$ is a graph   on the $y$-line and $z=z(y)$. Moreover,    $z''(0)=\theta'(0)\cos\theta(0)>0$, that is, $z$ attains a minimum. By symmetry, $\alpha$ is   a graph on the $y$ axis,   symmetric about   the line $y=0$ and with a minimum at $(0,0)$. Finally,
$$z''(y)=\frac{1}{\cos^3\theta(s)}\theta'(s)>0,$$
because $\theta'$ and $\cos\theta$ are both negative and thus the graph of $\alpha$ is  convex.
\end{enumerate}
If $\theta_1=\cos^{-1}(-2\lambda/v_3)$ and since $\theta$ is monotone, the graphic of $\alpha$ is asymptotic to two lines of slopes $\pm\tan\theta_1$.

\item Case $\lambda=0$. The result is known (\cite{mh}).
\end{enumerate}
\end{proof}

\begin{figure}[h]
\begin{center}
\includegraphics[width=.4\textwidth]{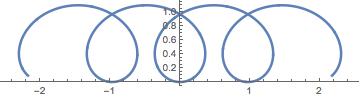}\ \includegraphics[width=.4\textwidth]{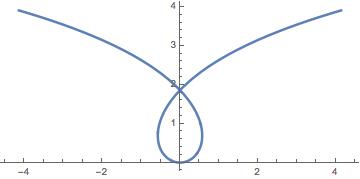}
\end{center}
\caption{Invariant $\lambda$-translating solitons for $\lambda=1$ (left) and $\lambda=1/2$ (right). Here $\vec{v}=(0,0,1)$ and the initial condition is  $\theta(0)=0$}\label{f1}
\end{figure}

\begin{figure}[h]
\begin{center}
\includegraphics[width=.14\textwidth]{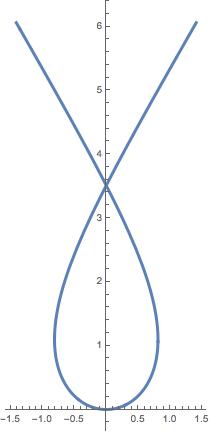}\qquad \includegraphics[width=.3\textwidth]{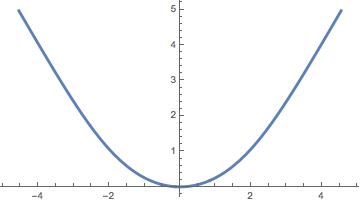}  \qquad \includegraphics[width=.23\textwidth]{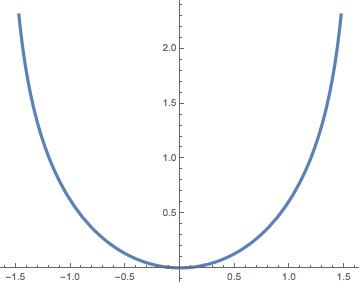}
\end{center}
\caption{Left and middle: invariant $\lambda$-translating solitons for $\lambda=1/4$ and initial condition $\theta_0=0$ and $\theta_0=\pi$, respectively. Right:  the grim reaper.  Here $\vec{v}=(0,0,1)$}\label{f2}
\end{figure}
\begin{remark} By Prop. \ref{fc} and   Th. \ref{c1}, the only cases where a $\lambda$-translating soliton of cylindrical type is a graph on a planar domain are the grim reaper ($\lambda=0$), which is  a graph on a strip, and the case $\lambda\in (0,v_3/2)$,  where the surface is an entire graph. In both cases, the graph is  convex.
\end{remark}

We finish this section   indicating that it is possible to integrate  explicitly (\ref{1}). The third equation in (\ref{1})  is
$$\int\frac{d\theta}{2\lambda+v_3\cos\theta}=s+a,\ a\in\r.$$
Then  the change $u=\tan(\theta/2)$ gives, up to a linear change in the parameter $s$,
$$\theta(s)=\left\{\begin{array}{ll}
2\arctan\left(\sqrt{\frac{2\lambda+v_3}{2\lambda-v_3}}\tan\left(\frac{\sqrt{4\lambda^2-v_3^2}}{2}s\right)\right)& \lambda>\frac{v_3}{2}\\
2\arctan(v_3s)&\lambda=\frac{v_3}{2}\\
2\arctan\left(\sqrt{\frac{2\lambda+v_3}{-2\lambda+v_3}}\tanh\left(\frac{\sqrt{v_3^2-4\lambda^2}}{2}s\right)\right)& 0<\lambda<\frac{v_3}{2}
\end{array}\right.$$
Once obtained the function $\theta$, we compute $\cos\theta$ and $\sin\theta$ in order to solve the functions $y$ and $z$ in (\ref{1}). This can be only done in those intervals where is defined the  $\arctan$ function. The explicit integration gives:

\begin{theorem}\label{t2} The generating curve $\alpha(s)=(y(s),z(s))$ of a cylindrical $\lambda$-translating soliton $X(s,t)=(t,y(s),z(s))$, $s\in I$, $t\in{\mathbb R}$ is:
\begin{enumerate}
\item Case $\lambda>v_3/2$.
\begin{eqnarray*}
&&y(s)=-2\lambda s+2\arctan\left(\frac{2\lambda+v_3}{\sqrt{4\lambda^2-v_3}}\tan(\frac{\sqrt{4\lambda^2-v_3^2}}{2}s)\right)\\
&&z(s)= \log\left|2\lambda-\cos(\sqrt{4\lambda^2-v_3^2}s)\right|.
\end{eqnarray*}
\item Case $\lambda=v_3/2$.
$$\alpha(s)=\left(-s+\frac{2}{v_3}\arctan(v_3s), \frac{1}{v_3}\log|1+s^2v_3^2|\right).$$
\item Case $ 0<\lambda< v_3/2$.
\begin{eqnarray*}
&&y(s)=-2\lambda s+2\arctan\left(\frac{2\lambda+v_3}{\sqrt{v_3^2-4\lambda^2}}\tanh(\frac{\sqrt{v_3^2-4\lambda^2}}{2}s)\right)\\
&&z(s)=\log\left|-2\lambda+\cosh(\sqrt{v_3^2-4\lambda^2}s)\right|.
\end{eqnarray*}

\item Case $\lambda=0$ (grim reaper).
$$\alpha(s)=\left(2\arctan(\tanh(\frac{s}{2})),\log(\cosh(s))\right).$$
\end{enumerate}
\end{theorem}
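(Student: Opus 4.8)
The plan is to integrate the remaining two equations of \eqref{1} once the closed form of $\theta(s)$ displayed above is in hand. The point that makes this purely mechanical is that, thanks to the third equation of \eqref{1}, both $y'=\cos\theta$ and $z'=\sin\theta$ can be expressed through $\theta$ and its derivatives, so no new antiderivative in the variable $\theta$ is ever required. Throughout we assume, as in Th. \ref{c1}, that $\alpha$ is not a straight line and $v_3>0$, the excluded cases being covered by Prop. \ref{fc}.

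For the first coordinate, write $v_3\cos\theta=\theta'-2\lambda$; then $y'=(\theta'-2\lambda)/v_3$ and, using the normalization \eqref{111},
$$y(s)=\frac{\theta(s)-\theta_0}{v_3}-\frac{2\lambda}{v_3}\,s .$$
Inserting the three expressions for $\theta$ (cases $\lambda>v_3/2$, $\lambda=v_3/2$, $0<\lambda<v_3/2$, together with the limiting case $\lambda=0$) yields the stated formula for $y$ in each regime. Here the only delicate point occurs when $\lambda>v_3/2$: the $\arctan$ appearing in $\theta$ must be taken in its continuous determination, not its principal branch (equivalently, the elementary integral $\int ds/(2\lambda-v_3\cos(\omega s))$ must be computed with the usual additive correction per half-period), and this is exactly what produces the invariance under a discrete group of horizontal translations asserted in case (1) of Th. \ref{c1}.

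For the second coordinate, differentiate $\theta'=2\lambda+v_3\cos\theta$ to get $\theta''=-v_3\,\theta'\sin\theta$, whence $\sin\theta=-\theta''/(v_3\theta')$. This identity is legitimate because $\theta'$ never vanishes and keeps a constant sign on all of $\r$: by Prop. \ref{l1} the curve $\alpha$ is not a line, and the sign of $\theta'$ in each regime is recorded in the proof of Th. \ref{c1}. Consequently
$$z(s)=-\frac1{v_3}\int_0^s\frac{\theta''}{\theta'}\,dt=-\frac1{v_3}\log\left|\frac{2\lambda+v_3\cos\theta(s)}{2\lambda+v_3\cos\theta_0}\right| .$$
It then remains to compute $2\lambda+v_3\cos\theta(s)$ on each explicit solution. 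Writing $\cos\theta=(1-u^2)/(1+u^2)$ with $u=\tan(\theta/2)$ equal to the closed form found for $\theta$, clearing denominators by $\cos^2(\cdot)$ or $\cosh^2(\cdot)$ and applying the double-angle identities, one finds that $2\lambda+v_3\cos\theta(s)$ reduces to a constant divided by $2\lambda-v_3\cos(\omega s)$ when $\lambda>v_3/2$, by $1+v_3^2s^2$ when $\lambda=v_3/2$, and by $v_3\cosh(\nu s)-2\lambda$ when $0<\lambda<v_3/2$, where $\omega=\sqrt{4\lambda^2-v_3^2}$ and $\nu=\sqrt{v_3^2-4\lambda^2}$. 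Substituting into the logarithm and absorbing the additive constant (which only shifts $\alpha$ vertically, within the freedom left by \eqref{111}) gives the asserted formulas for $z$; the case $\lambda=0$ is classical, cf. \cite{mh}, and is also recovered as the $\lambda\to0$ limit of the third regime.

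The main obstacle is not conceptual but one of organization: carrying the three regimes in parallel, watching the sign of $\theta'$ and of $2\lambda+v_3\cos\theta$ inside the logarithm, and — in the regime $\lambda>v_3/2$ — insisting on the continuous branch of $\arctan$, so that the formula for $y$ correctly reflects the periodicity-up-to-translation of the generating curve rather than exhibiting spurious jumps.
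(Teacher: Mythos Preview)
Your approach is correct. The paper itself supplies essentially no proof beyond the sentence ``once obtained the function $\theta$, we compute $\cos\theta$ and $\sin\theta$ in order to solve the functions $y$ and $z$ in (\ref{1}) \ldots\ the explicit integration gives'', so your write-up is in fact more detailed than the original.

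There is, however, a small methodological difference worth noting. The paper's phrasing suggests the brute-force route: plug the closed form of $\theta(s)$ into $\cos\theta$ and $\sin\theta$ via $u=\tan(\theta/2)$, obtain explicit rational/trigonometric functions of $s$, and integrate those. You instead exploit the ODE itself to write $y'=(\theta'-2\lambda)/v_3$ and $z'=-\theta''/(v_3\theta')$, which makes both antiderivatives immediate ($y=\theta/v_3-2\lambda s/v_3$ and $z=-\frac{1}{v_3}\log|\theta'|$) and pushes all the case analysis into a single evaluation of $\theta'=2\lambda+v_3\cos\theta$ on the known $\theta$. This is cleaner and less error-prone, and it makes transparent why the formulas hold only up to an additive constant and only on intervals where $\theta'\neq 0$ --- exactly the caveats the paper records in words. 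Your remarks on the continuous determination of $\arctan$ in the regime $\lambda>v_3/2$ are also apt and match the qualitative picture of Th.~\ref{c1}(1).
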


\section{ Translating solitons of translation type}\label{stran}

The parametrization  $X(s,t)=\alpha(s)+t\vec{a}$ of  a cylindrical surface  allows to see the surface   as the sum of two planar curves, namely,   $X(s,t)=\alpha(s)+\beta(t)$, where $\beta$ is the straight line $\beta(t)=t\vec{a}$. More generally, we can consider the solutions of (\ref{eq1}) that are the sum of two planar curves $\alpha(x)=(x,0,f(x))$ and $\beta(y)=(0,y,g(y))$. The parametrization $X(x,y)=(x,y,f(x)+g(y))$ is noting that the surface in the non-parametric form $z=f(x)+g(y)$. A such surface is called a \emph{translation surface}  and in this section we investigate the solutions of (\ref{eq1}) that are translation surfaces. For translating solitons $(\lambda=0)$ and when the density vector is orthogonal to the $xy$-plane,   it is  known that the only translating solitons of translation type are cylindrical surfaces (\cite{mh}). Exactly, and besides the plane, the functions $f$ and $g$ are,    up to a change of the roles of $f$ and  $g$,    $f(x)=ax+b$  and
$$g(y)=-(1+a^2)\log\left|\cos\left(\frac{x}{\sqrt{1+a^2}}\right)\right|,$$
where $a,b\in\r$. The surface for $a=0$ is the grim reaper. When $\vec{v}$ is not orthogonal to the $xy$-plane, there exist many examples of translating solitons  $z=f(x)+g(y)$ (\cite{lo2}).

In this section, we find  all  $\lambda$-translating solitons of translation type when   the density vector $\vec{v}$ takes all its generality.

 \begin{theorem}\label{t31} Let $\vec{v}=(v_1,v_2,v_3)$ be the density vector. The only $\lambda$-translating solitons  $z=f(x)+g(y)$ are:
 \begin{enumerate}
 \item Planes and  $f$ and $g$ are linear functions.
 \item  Up to a change of the roles of $f$ and $g$, we have $f(x)=ax+b$, and $g$ satisfies
 \begin{equation}\label{gg}
(1+a^2)g''=2\lambda (1+a^2+g'^2)^{3/2}+(1+a^2+g'^2)(-v_1 a-v_2 g'+v_3),\end{equation}
where $a,b\in\r$. In particular, the surface is cylindrical  and  the rulings are parallel to the vector $(1,0,a)$.
\end{enumerate}
 \end{theorem}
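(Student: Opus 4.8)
The plan is to write the surface as a graph and turn \eqref{eq1} into a single PDE, and then show by a separation argument that one of the two generating functions must be affine, which immediately produces \eqref{gg}. Put $u(x,y)=f(x)+g(y)$, so that $u_x=f'$, $u_y=g'$, $u_{xx}=f''$, $u_{yy}=g''$, $u_{xy}=0$, and orient by $N=(-f',-g',1)/\sqrt{W}$ with $W:=1+f'^2+g'^2$. Clearing the denominator, \eqref{eq1} becomes
\begin{equation}\label{starEq}
(1+g'^2)\,f''+(1+f'^2)\,g''=2\lambda\,W^{3/2}+W\,(v_3-v_1 f'-v_2 g').
\end{equation}
If $f$ is affine, say $f(x)=ax+b$, then $f''=0$ and \eqref{starEq} is exactly \eqref{gg}; moreover $(x,y,z)\mapsto(x+t,y,z+at)$ preserves the surface, so it is cylindrical with rulings in the direction $(1,0,a)$. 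Interchanging the roles of $f$ and $g$ gives the symmetric case, and if both are affine the surface is a plane. Hence everything reduces to the claim that \emph{every solution of \eqref{starEq} has $f$ affine or $g$ affine}. (One may use that a $C^2$ solution of \eqref{eq1} is automatically smooth, so $f$ and $g$ are smooth up to an additive constant.)

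To prove this, assume for contradiction that neither $f$ nor $g$ is affine. Then $f''\not\equiv0$ and $g''\not\equiv0$, so there is a nonempty open rectangle $U$ on which $f'$, $f''$, $g'$, $g''$ are all nonzero. Applying $\partial^2/\partial x\,\partial y$ to \eqref{starEq} on $U$ and simplifying yields
\[
g'g''f'''+f'f''g'''+v_1 f''g'g''+v_2 f'f''g''=\frac{3\lambda\,f'f''g'g''}{\sqrt{W}},
\]
and dividing by $f'f''g'g''$ this becomes the separated identity
\begin{equation}\label{sepEq}
\Phi(x)+\Psi(y)=\frac{3\lambda}{\sqrt{1+f'^2+g'^2}},\qquad
\Phi:=\frac{f'''}{f'f''}+\frac{v_1}{f'},\quad
\Psi:=\frac{g'''}{g'g''}+\frac{v_2}{g'}.
\end{equation}

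Now the argument splits on $\lambda$. If $\lambda\neq0$, differentiate \eqref{sepEq} in $x$: the left side becomes $\Phi'(x)$, while the right side becomes $-3\lambda f'f''(1+f'^2+g'^2)^{-3/2}$, whose $y$-derivative equals $9\lambda f'f''g'g''(1+f'^2+g'^2)^{-5/2}\neq0$ on $U$; this contradicts the fact that $\Phi'$ does not depend on $y$. If $\lambda=0$, then \eqref{sepEq} forces $\Phi\equiv c\equiv-\Psi$ for a constant $c$. Taking $p:=f'$ as independent variable on $U$ (legitimate since $f''=dp/dx\neq0$) and using $f'''=f''\,df''/dp$, the relation $f'''/f''=cf'-v_1$ integrates to $f''=\tfrac{c}{2}f'^2-v_1 f'+D$, and likewise $g''=-\tfrac{c}{2}g'^2-v_2 g'+E$ for constants $D,E$. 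Substituting these back into \eqref{starEq} with $\lambda=0$, the terms in $f'^2g'^2$ cancel and what remains is a polynomial identity in $(f',g')$; matching the coefficients of $f'^3$, of $g'^3$, and then of $f'^2$, $g'^2$ and the constant term forces in turn $v_1=0$, $v_2=0$ and $v_3=0$, contradicting $|\vec v|=1$.

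Therefore one of $f,g$ is affine, and the theorem follows from the discussion after \eqref{starEq}. I expect the two slightly delicate points to be the following: first, carrying out the double differentiation of \eqref{starEq} cleanly — because of the $W^{3/2}$ term the square root does not disappear and must be tracked carefully to arrive at \eqref{sepEq}; and second, the resonant value $\lambda=0$, where \eqref{sepEq} degenerates to $\Phi+\Psi\equiv0$ and one instead has to pass to the phase variable $p=f'$, obtain the quadratic expressions for $f''$ and $g''$, and feed them back into \eqref{starEq} to extract the contradiction with $|\vec v|=1$.
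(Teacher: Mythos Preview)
Your proof is correct and, for $\lambda\neq 0$, follows exactly the same route as the paper: write \eqref{eq1} for the graph $z=f(x)+g(y)$, take the mixed second derivative $\partial^2/\partial x\partial y$, divide by $f'f''g'g''$ to separate variables, and then observe that the right-hand side $3\lambda/\sqrt{W}$ cannot equal a sum $\Phi(x)+\Psi(y)$ unless $\lambda=0$ (by differentiating once more in each variable).

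The only difference is in the treatment of $\lambda=0$. The paper simply cites \cite{lo2,mh} for this case and moves on. You instead give a self-contained argument: from $\Phi(x)+\Psi(y)\equiv 0$ you extract $f''=\tfrac{c}{2}f'^2-v_1 f'+D$ and $g''=-\tfrac{c}{2}g'^2-v_2 g'+E$, substitute back into \eqref{starEq}, and read off $v_1=v_2=v_3=0$ from the coefficients of $f'^3$, $g'^3$, and the three quadratic/constant relations (which combine to $2v_3=v_3$). This is a nice, elementary replacement for the external references; it makes the proof uniform in $\lambda$ at the cost of a short extra computation. Note that the polynomial-matching step is legitimate because on the rectangle $U$ the map $(x,y)\mapsto(f'(x),g'(y))$ is an open map (since $f'',g''\neq 0$), so the identity holds on an open set of $(p,q)$-values.
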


\begin{proof} The case $\lambda=0$ was studied in \cite{lo2,mh}.  Consider now $\lambda\not=0$.   Using the parametrization  $X(x,y)=(x,y,f(x)+g(y))$, with $(x,y)\in I\times J\subset\r^2$, the Gauss map is
$$\langle N,\vec{v}\rangle=\frac{-v_1 f'-v_2 g'+v_3}{W^{1/2}},$$
 where $W=1+f'^2+g'^2$. Then Eq. \eqref{eq1} is
\begin{equation}\label{tr1}
\frac{(1+g'^2)f''+(1+f'^2)g''}{W^{3/2}}=2\lambda+\frac{-v_1 f'-v_2 g'+v_3}{W^{1/2}}.
\end{equation}
Multiplying by $W^{3/2}$ and  differentiating with respect to $x$,  next with respect to $y$ and simplifying, we get
\begin{equation}\label{tr2}
g'g''f'''+f'f''g'''=3\lambda\frac{f'f''g'g''}{W^{1/2}}-(v_1 f''g'g''+v_2 f'f''g'').
\end{equation}
Suppose that at some $(x_0,y_0)\in I\times J$ we have $f'f''(x_0)g'g''(y_0)\not=0$. By continuity, in some open set around $(x_0,y_0)$, we have $f'f''g'g''\not=0$. Dividing (\ref{tr2}) by $f'f''g'g''$, we obtain
$$\frac{f'''}{f'f''}+\frac{g'''}{g'g''}+\frac{v_1}{f'}+\frac{v_2}{g'}=\frac{3\lambda}{W^{1/2}}.$$
Since the left hand side is the sum of a function on $x$ and a function on $y$, when we differentiate with respect to $x$ and next with respect to $y$, the left hand side vanishes. Doing the same differentiations in the right hand side, we get
$$0=9\lambda\frac{f'f''g'g''}{W^{5/2}},$$
obtaining a contradiction because $\lambda\not=0$. The above argument proves $f'f''g'g''=0$ in $I\times J$. Without loss of generality, and by the symmetry of the roles of $f$ and $g$, we suppose $f'f''=0$ in the interval $I$. If at some point $x_0\in I$, we have $f'(x_0)\not=0$, then $f''=0$ around $x_0$, that is,  $f(x)=ax+b$, $a,b\in\r$. With this function $f$,  Eq. \eqref{tr1} reduces into  \eqref{gg}, obtaining the result. In the case that $f'=0$, then $f''=0$, concluding the same result.
\end{proof}

We finish this section showing two examples of $\lambda$-translating solitons of translation type with $\lambda\not=0$.
\begin{enumerate}
\item Consider  the density vector to be   $\vec{v}=(0,0,1)$. Take  $f(x)=0$ and $\lambda=1$ in (\ref{gg}). Then $g$ satisfies $g''=1+g'^2+2(1+g'^2)^{3/2}$. This surface appears in Th. \ref{c1}, item 1

\item Consider   $\vec{v}=(1,0,0)$ as the density vector. Take $f(x)=0$. Then Eq. (\ref{gg}) is $g''=2\lambda (1+g'^2)^{3/2}$ whose solution is
$$g(y)=\frac{\sqrt{1-4\lambda^2y^2}}{2\lambda},$$
that is, $y=g(y)$ describes a circle of radius $1/(2|\lambda|)$ and the surface is a circular right cylinder whose axis is $\vec{v}$. This solution appeared in  Prop. \ref{fc}.
\end{enumerate}

\section{ Rotational $\lambda$-translating solitons}\label{srot}

In this section, we classify all  rotational surfaces that are $\lambda$-translating solitons. First examples are a plane orthogonal to $\vec{v}$ and a right circular cylinder with axis parallel to $\vec{v}$. In the particular case $\lambda=0$,   the translating solitons of rotational type were studied in \cite{css}, obtaining two types of surfaces, namely, the paraboloid bowl soliton and a family of  rotationally   surfaces of winglike shape.

Our interest is also those solutions with a particular geometry as for example, when the surface meets  the rotation axis or if it is embedded. A  first question is about the existence of closed surfaces.  Let us recall that  the round sphere is the only rotational constant mean curvature    surface that is closed and that   there are many  examples of closed   surfaces with constant mean curvature which are not rotational.   However, for $\lambda$-translating solitons we have:

\begin{theorem} \label{tclosed}
There are no closed $\lambda$-translating solitons.
\end{theorem}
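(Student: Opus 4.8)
The plan is to use a maximum principle argument exploiting the structure of Eq.~(\ref{eq1}). Suppose $\Sigma$ is a closed (compact without boundary) $\lambda$-translating soliton with density vector $\vec v$, $|\vec v|=1$. The key observation is that on a compact surface the function $\langle p,\vec v\rangle$, $p\in\Sigma$, attains a maximum at some point $p_0\in\Sigma$. At such a point the tangent plane of $\Sigma$ is orthogonal to $\vec v$, so the Gauss map satisfies $N(p_0)=\pm\vec v$; choosing the orientation appropriately we may take $N(p_0)=\vec v$, hence $\langle N(p_0),\vec v\rangle=1$ and Eq.~(\ref{eq1}) gives $H(p_0)=\lambda+\tfrac12$.

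First I would compare $\Sigma$ near $p_0$ with the plane $\Pi$ through $p_0$ orthogonal to $\vec v$. Since $p_0$ maximizes the height $\langle\cdot,\vec v\rangle$, the surface $\Sigma$ lies locally on one side of $\Pi$, so the second fundamental form of $\Sigma$ at $p_0$ with respect to $N=\vec v$ is negative semidefinite, whence $H(p_0)\le 0$, that is $\lambda\le-\tfrac12$. Symmetrically, the function $\langle p,\vec v\rangle$ also attains a \emph{minimum} at some $p_1\in\Sigma$, where again $N(p_1)=\pm\vec v$; taking the orientation consistent with the global choice already fixed, one gets $N(p_1)=-\vec v$, so $\langle N(p_1),\vec v\rangle=-1$ and $H(p_1)=\lambda-\tfrac12$, while the surface now lies locally \emph{above} its tangent plane so that the second fundamental form is positive semidefinite and $H(p_1)\ge 0$, i.e. $\lambda\ge\tfrac12$. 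The two inequalities $\lambda\le-\tfrac12$ and $\lambda\ge\tfrac12$ are incompatible, and this contradiction shows no closed example exists.

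The step that needs care is the bookkeeping of orientations: Eq.~(\ref{eq1}) is written for a fixed global Gauss map $N$ on $\Sigma$, and one must check that at the height-maximum point the \emph{outward-in-the-$\vec v$-direction} normal (the one giving a negative semidefinite shape operator) is exactly the $N$ appearing in (\ref{eq1}) up to a sign that is the same at $p_0$ and $p_1$. This is handled by noting that if the chosen $N$ happens to give $N(p_0)=-\vec v$ instead, then by the remark in the introduction reversing orientation turns a $\lambda$-translating soliton into a $-\lambda$-translating soliton, so we may assume $N(p_0)=\vec v$ without loss of generality; the same global $N$ then forces $N(p_1)=-\vec v$ at the minimum by continuity and connectedness (a closed surface in $\r^3$ is orientable and $\langle N,\vec v\rangle$ ranges over an interval containing both $1$ and $-1$). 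I expect this orientation argument, rather than the comparison-with-a-plane inequality, to be the only genuinely subtle point; the convexity/sign argument at the extremal points is the standard touching-plane principle.
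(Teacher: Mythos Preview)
Your touching-plane argument contains a sign error that breaks it. At the height minimum $p_1$ you take $N(p_1)=-\vec v$ and then assert that, because the surface lies locally \emph{above} its tangent plane, the second fundamental form is positive semidefinite and $H(p_1)\ge 0$. But ``above'' means in the $+\vec v$ direction, which is the $-N(p_1)$ direction. With the very convention you used at $p_0$ (surface on the $-N$ side $\Rightarrow$ $II\le 0$), the second fundamental form at $p_1$ is therefore \emph{negative} semidefinite and $H(p_1)\le 0$, yielding only $\lambda\le \tfrac12$. Combined with $\lambda\le -\tfrac12$ from $p_0$, you get no contradiction. A quick sanity check: on a round sphere with the outward normal, $H$ has the \emph{same} sign at the north and south poles, not opposite signs. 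More invariantly, the mean curvature vector $\vec H=HN$ points toward $-\vec v$ at $p_0$ and toward $+\vec v$ at $p_1$; writing $N(p_i)=\epsilon_i\vec v$ this gives $\epsilon_0\lambda\le -\tfrac12$ and $\epsilon_1\lambda\ge -\tfrac12$, which is never contradictory for any choice of $\epsilon_0,\epsilon_1\in\{\pm1\}$. So the two-point touching argument, by itself, cannot close. (Your separate worry about whether $N(p_1)=-\vec v$ is forced by the global orientation is also a real issue for immersed surfaces, but it is moot here since even granting it the inequalities do not clash.)

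The paper's proof is of a completely different nature and avoids all orientation bookkeeping: one integrates the identity
\[
\Delta\langle\psi,\vec v\rangle \;=\; 2H\langle N,\vec v\rangle \;=\; 2\lambda\langle N,\vec v\rangle+\langle N,\vec v\rangle^{2}
\]
over the closed surface. The left side integrates to $0$; the term $\int_\Sigma\langle N,\vec v\rangle\,d\Sigma$ also vanishes by the divergence theorem applied to the constant field $\vec v$ in $\r^3$. Hence $\int_\Sigma\langle N,\vec v\rangle^{2}\,d\Sigma=0$, so $\langle N,\vec v\rangle\equiv 0$ and $\Sigma$ lies in a plane parallel to $\vec v$, which is impossible for a closed surface. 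If you want to rescue a maximum-principle style proof, you would need a genuine comparison with another $\lambda$-soliton (not a bare tangent plane) together with the strong maximum principle; the simple sign inequalities at the two extremal points are not enough.
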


\begin{proof} By contradiction, let $\psi:\Sigma\rightarrow\r^{3}$ be an immersion of a closed surface $\Sigma$ whose mean curvature $H$ satisfies (\ref{eq1}). It is known that if $\vec{a}\in\r^{3}$,   the Laplacian $\Delta$ of the height function $\langle \psi,\vec{a}\rangle$ is $\Delta\langle\psi,\vec{a}\rangle=2H\langle N,\vec{a}\rangle$. If  we take $\vec{a}=\vec{v}$, we have
\begin{equation}\label{lapla}
\Delta\langle\psi,\vec{v}\rangle=2\lambda\langle N,\vec{v}\rangle +\langle N,\vec{v}\rangle^2.
\end{equation}
 We integrate this identity in $\Sigma$. By using the divergence theorem and because $\partial\Sigma=\emptyset$, we have
\begin{equation}\label{com}
0=2\lambda\int_\Sigma \langle N,\vec{v}\rangle\ d\Sigma+\int_\Sigma\langle N,\vec{v}\rangle^2\ d\Sigma.
\end{equation}
On the other hand,  the constant vector field in $\r^3$ defined by $Y(p)=\vec{v}$ has zero divergence and thus the divergence theorem gives now $\int_\Sigma\langle N,\vec{v}\rangle\ d\Sigma=0$. We conclude from \eqref{com} that $0=\int_\Sigma\langle N,\vec{v}\rangle^2\ d\Sigma$, that is, $\Sigma$ is included in a plane parallel to $\vec{v}$, a contradiction.
\end{proof}

\begin{remark} In the literature, the proof of Th. \ref{tclosed} for translating solitons ($\lambda=0$) uses  the maximum principle for (\ref{eq1}) and an argument of comparison with planes parallel to $\vec{v}$. However, this proof fails if $\lambda\not=0$. In contrast, the proof given in Th. \ref{tclosed}  is simpler because only uses the divergence theorem and it holds for any $\lambda$.
\end{remark}

Although in our initial study there is not an {\it a priori} relation  between the rotational axis and the density vector, we prove that they must be parallel.

\begin{proposition}\label{pr1} Let $\Sigma$ be  a rotational surface about the axis $L$. If $\Sigma$ is a $\lambda$-translating soliton with density vector $\vec{v}$, then $\vec{v}$ and $L$ are parallel or $\Sigma$ is a plane orthogonal to $L$ and $\vec{v}$.
\end{proposition}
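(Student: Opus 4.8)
The plan is to set up the rotational surface explicitly and exploit the fact that the left-hand side of \eqref{eq1} depends only on the distance to the axis $L$, while the right-hand side involves $\langle N,\vec v\rangle$, which in general also picks up the angular coordinate unless $\vec v$ is aligned with $L$. Concretely, I would choose coordinates so that $L$ is the $z$-axis, write a point of $\Sigma$ as $\psi(u,\varphi)=(r(u)\cos\varphi, r(u)\sin\varphi, h(u))$ for the profile curve $(r(u),h(u))$ in the $xz$-half-plane, and compute the Gauss map $N(u,\varphi)$; it has the form $N=(n_1(u)\cos\varphi, n_1(u)\sin\varphi, n_3(u))$ where $(n_1,n_3)$ is the unit normal of the profile curve (suitably oriented). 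Writing $\vec v=(v_1,v_2,v_3)$, one gets $\langle N,\vec v\rangle = n_1(u)\,(v_1\cos\varphi+v_2\sin\varphi) + n_3(u)v_3$, whereas $H$, being an invariant of the rotational surface, is a function $H(u)$ of $u$ alone.

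The key step is then the separation-of-variables argument. Since \eqref{eq1} reads $2H(u) = 2\lambda + n_1(u)(v_1\cos\varphi+v_2\sin\varphi)+n_3(u)v_3$ and the left side is independent of $\varphi$, differentiating in $\varphi$ gives $n_1(u)(-v_1\sin\varphi+v_2\cos\varphi)=0$ for all $(u,\varphi)$. Hence either $v_1=v_2=0$, in which case $\vec v$ is parallel to $L$ and we are in the first alternative, or $n_1(u)\equiv 0$ on the open set where the identity must hold. In the latter case the normal of the profile curve is everywhere vertical, so $n_1\equiv 0$ forces $r'(u)\equiv 0$ (the profile curve is a horizontal segment $r=\text{const}$), meaning $\Sigma$ is either a right circular cylinder with axis $L$ or a plane orthogonal to $L$. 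I would then rule out the cylinder in the genuinely non-parallel case: for a cylinder $r\equiv r_0>0$ one has $2H=1/r_0$ constant, and $n_3\equiv 0$, so \eqref{eq1} becomes $1/r_0 = 2\lambda + n_1 v_3 \cos(\text{something})$ — wait, more carefully, with $n_1=\pm1$, $n_3=0$ the relation is $1/r_0 = 2\lambda \pm (v_1\cos\varphi+v_2\sin\varphi)$, which again forces $v_1=v_2=0$ unless it fails; so for a cylinder with $\vec v$ not parallel to $L$ there is no solution, leaving only the plane $r$-degenerate case, i.e. a plane orthogonal to $L$, and that plane is a $\lambda$-translating soliton precisely when its normal is $\vec v$ (a $1/2$-translating soliton, consistent with the examples), so $\vec v$ is orthogonal to the plane.

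The main obstacle is bookkeeping the orientation and the degenerate loci carefully: the identity $n_1(u)(-v_1\sin\varphi+v_2\cos\varphi)=0$ only gives information where $n_1\ne 0$, so I must argue that $n_1$ cannot vanish merely on a nowhere-dense set for a non-planar profile — this follows from real-analyticity of solutions of the ODE (or from a connectedness argument: the zero set of $r'$ is closed, and on its complement $v_1=v_2=0$, so by continuity the conclusion $v_1=v_2=0$ propagates unless $r'\equiv 0$ everywhere). A secondary subtlety is making sure the case $r\equiv 0$ (the axis itself) and points where the profile meets $L$ are handled, but those are lower-dimensional and do not affect the separation argument performed on the open dense complement. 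Once the separation step is in place, the rest is the short verification that the only rotational surfaces with vertical normal are cylinders and horizontal planes, together with the elementary check above that excludes the cylinder when $\vec v\not\parallel L$.
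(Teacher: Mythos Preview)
Your approach is essentially the paper's: parametrize $\Sigma$ rotationally, observe that $H$ depends only on the profile parameter while $\langle N,\vec v\rangle$ carries the angular variable via $v_1\cos\varphi+v_2\sin\varphi$, and use linear independence of $\{1,\cos\varphi,\sin\varphi\}$ (equivalently, your differentiation in $\varphi$) to force either $v_1=v_2=0$ or $n_1\equiv 0$.

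There is, however, a slip in your final step. With $N=(n_1\cos\varphi,\,n_1\sin\varphi,\,n_3)$ one has (up to sign) $n_1=-h'/\sqrt{r'^2+h'^2}$ and $n_3=r'/\sqrt{r'^2+h'^2}$, so $n_1\equiv 0$ forces $h'\equiv 0$, not $r'\equiv 0$. In other words, the normal being vertical means the tangent of the profile curve is horizontal, giving a horizontal line $z=\mathrm{const}$ and hence directly a plane orthogonal to $L$; the cylinder never appears in this branch (for the cylinder $n_1=\pm 1$, $n_3=0$), so your subsequent exclusion of the cylinder is unnecessary and stems from the mislabeling. The paper's proof reaches exactly this point: $\sin\theta\equiv 0$ (which is your $n_1\equiv 0$) gives a horizontal plane, and the density vector is then arbitrary. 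Note also that the paper does not actually verify the clause ``orthogonal to $\vec v$'' in the statement---the proof only yields a plane orthogonal to $L$---so you need not labor to show the plane is perpendicular to $\vec v$.
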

\begin{proof}  After a change of coordinates, we suppose that the rotational axis is the $z$-axis. A parametrization of $\Sigma$ is  $X(s, t)=(x(s)\cos t,x(s)\sin t,z(s))$, $s\in I$, $ t\in\r$, where $\alpha(s)= (x(s),0,z(s))$, $x(s)>0$, is the profile curve which we suppose is parametrized by the arc-length. Let $x'(s)=\cos\theta(s)$, $z'(s)=\sin\theta(s)$ for some function $\theta$. If $\vec{v}=(v_1,v_2,v_3)$, then Eq. \eqref{eq1} is
$$\frac{\sin\theta}{x}+\theta'=2\lambda-v_1\sin\theta\cos t-v_2\sin\theta\sin t+v_3\cos\theta,$$
for all $s\in I, t\in\r$. Since the functions $\{\cos t,\sin t,1\}$ are independent linearly, we deduce $$v_1\sin\theta(s)=v_2\sin\theta(s)=0$$
 for all $s\in I$. If there exists $s_0\in I$ such that $\sin\theta(s_0)\not=0$, then $v_1=v_2=0$ and $\vec{v}$ is parallel to the $z$-axis, proving the result. On the contrary, the function  $\sin\theta$ is identically $0$ in the interval $I$.   This means that  $\alpha$ is a horizontal line and $\Sigma$ is a horizontal plane: now  the density vector $\vec{v}$ is  arbitrary.
\end{proof}

As a consequence of  Prop. \ref{pr1} and  without loss of generality, we suppose that the rotational axis is the $z$-axis and $\vec{v}=(0,0,1)$. As before, if $\alpha(s)=(x(s),0,z(s))$  is the profile curve, then the functions $x$, $z$ and $\theta$ satisfy
\begin{equation}\label{11}
 \left\{\begin{array}{lll}
 x'(s)&=&  \cos\theta(s)\\
 z'(s)&=& \sin\theta(s)\\
 \theta'(s)&=&\displaystyle2\lambda+\cos\theta(s)-\frac{\sin\theta(s)}{x(s)}.
\end{array}
\right.
\end{equation}

A particular case of (\ref{11}) appears when  $\theta$ is a constant function. Then $\alpha$ is a straight line and $2\lambda+\cos\theta+\sin\theta/x(s)=0$. From the first equation in (\ref{11}), we know   $x(s)=a+(\cos\theta) s$, $a\in\r$, and substituting in the third equation of (\ref{11}), we have
$$\cos\theta(2\lambda+\cos\theta)s+2\lambda a+a\cos\theta-\sin\theta=0.$$
Since this is a polynomial equation on the variable $s$, we deduce
\begin{equation}\label{cc}
\cos\theta(2\lambda +\cos\theta)=0,\quad 2\lambda a+a\cos\theta-\sin\theta=0.
\end{equation}
If $\cos\theta=0$, then   $\alpha$ is the vertical line of equation $x=1/(2|\lambda|)$ and the surface is a right circular cylinder. If $\cos\theta\not=0$, we have from \eqref{cc} that $\sin\theta=0$ and thus $\alpha$ is a horizontal line and $\Sigma$ is a horizontal plane. Therefore we have proved the next result:

\begin{proposition} The only rotational $\lambda$-translating solitons generated by straight lines are planes   and  right circular cylinders of radius $1/(2|\lambda|)$.
\end{proposition}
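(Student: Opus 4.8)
The plan is to read the classification straight off the system \eqref{11} by using that a straight-line profile curve makes the angle function $\theta$ constant. Concretely: if the rotational $\lambda$-translating soliton is generated by a straight line, its profile curve $\alpha(s)=(x(s),0,z(s))$ (with $x>0$, arc-length parametrized) is a line, so $\theta\equiv\text{const}$; in particular $\theta'\equiv 0$, and the first equation of \eqref{11} integrates to $x(s)=a+s\cos\theta$ for some $a\in\r$. Substituting $\theta'=0$ into the third equation of \eqref{11} gives $2\lambda+\cos\theta=\sin\theta/x(s)$, i.e.
$$(2\lambda+\cos\theta)\,(a+s\cos\theta)=\sin\theta$$
for all admissible $s$. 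Since the left-hand side is affine in $s$ and the right-hand side is constant, I would equate the coefficient of $s$ and the constant term to obtain the two scalar relations
$$\cos\theta\,(2\lambda+\cos\theta)=0,\qquad a\,(2\lambda+\cos\theta)=\sin\theta.$$

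\textbf{Key steps.} I would then split on the first relation. If $\cos\theta=0$, then $\theta=\pm\pi/2$, so $\alpha$ is a vertical line and $x(s)\equiv a$; the second relation becomes $2\lambda a=\sin\theta=\pm1$. The value $\lambda=0$ is impossible here (it would force $\sin\theta=0$), so $\lambda\neq 0$ and $|a|=1/(2|\lambda|)$; combined with $x>0$ this says the profile is the vertical line $x=1/(2|\lambda|)$, and $\Sigma$ is the right circular cylinder of radius $1/(2|\lambda|)$ about the $z$-axis. If instead $\cos\theta\neq 0$, the first relation forces $2\lambda+\cos\theta=0$, and then the second relation forces $\sin\theta=0$; hence $\theta\in\{0,\pi\}$, $\alpha$ is a horizontal line, and $\Sigma$ is a plane orthogonal to $\vec v=(0,0,1)$ (consistent with a plane orthogonal to $\vec v$ being a $1/2$-translating soliton, here $\lambda=-\tfrac12\cos\theta\in\{-\tfrac12,\tfrac12\}$). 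Putting the two cases together gives exactly the planes and the cylinders of radius $1/(2|\lambda|)$ claimed.

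\textbf{Main difficulty.} There is essentially no obstacle: once \eqref{11} is available, a straight-line profile collapses the ODE system to a single affine polynomial identity in $s$, and the result follows by comparing coefficients. The only points requiring a little attention are (i) handling the degenerate value $\lambda=0$, for which the vertical-line (cylinder) solution disappears, and (ii) keeping track of the constraint $x(s)>0$, which is what pins the radius to $1/(2|\lambda|)$ rather than to a signed constant. It is also worth remarking explicitly that the oblique-line case $\sin\theta\cos\theta\neq 0$ is automatically ruled out by the two relations, so no circular cone can occur.
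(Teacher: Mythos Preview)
Your proposal is correct and follows essentially the same route as the paper: both arguments set $\theta'\equiv 0$, integrate $x'=\cos\theta$ to $x(s)=a+s\cos\theta$, substitute into the third equation of \eqref{11} to obtain an affine identity in $s$, and then split on the vanishing of $\cos\theta$ versus $2\lambda+\cos\theta$. Your version is in fact slightly more explicit about the degenerate case $\lambda=0$ and the positivity constraint $x>0$, but the method is the same.
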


We give now the relationship between  the shape of a rotational $\lambda$-translating soliton  and  the sign of $\lambda$.  After a vertical translation, we take the initial conditions
\begin{equation}\label{112}
x(0)=x_0>0,\ \ z(0)=0,\ \ \theta(0)=\theta_0,
\end{equation}
and denote $x(s;\lambda,\theta_0),\ z(s;\lambda,\theta_0),\ \theta(s;\lambda,\theta_0)$ the solutions of (\ref{11}) with initial conditions (\ref{112}) depending on $\theta_0$. The next result is analogous to Prop. \ref{pr25}.

 \begin{proposition}\label{pr26} For suitable initial conditions in (\ref{112}), the graphic of a solution  of (\ref{11}) for $\lambda$  coincides with the graphic of a solution of (\ref{11}) for $-\lambda$ and  we have
 $$(x(s;\lambda,\theta_0),z(s;\lambda,\theta_0))=(x(-s;-\lambda,\theta_0+\pi), z(-s;-\lambda,\theta_0+\pi)),\ s\in\r.$$
 \end{proposition}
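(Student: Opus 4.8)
The plan is to follow the same strategy as in the proof of Prop.~\ref{pr25}: write down an explicit change of variables that carries a solution of (\ref{11}) for the parameter $-\lambda$ to a solution of (\ref{11}) for the parameter $\lambda$, and then conclude by uniqueness of solutions of the initial value problem. Concretely, let $(X,Z,\Theta)=(x(\cdot;-\lambda,\theta_0+\pi),\,z(\cdot;-\lambda,\theta_0+\pi),\,\theta(\cdot;-\lambda,\theta_0+\pi))$ denote the solution of (\ref{11}) with density parameter $-\lambda$ and initial data $(x_0,0,\theta_0+\pi)$, and define
$$\bar x(s)=X(-s),\qquad \bar z(s)=Z(-s),\qquad \bar\theta(s)=\Theta(-s)+\pi.$$

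Next I would verify that $(\bar x,\bar z,\bar\theta)$ solves (\ref{11}) with parameter $\lambda$. Using $\cos\bar\theta(s)=-\cos\Theta(-s)$ and $\sin\bar\theta(s)=-\sin\Theta(-s)$, the first two equations follow at once from $\bar x'(s)=-X'(-s)=-\cos\Theta(-s)=\cos\bar\theta(s)$ and $\bar z'(s)=-Z'(-s)=-\sin\Theta(-s)=\sin\bar\theta(s)$; for the third, $\bar\theta'(s)=-\Theta'(-s)=-\big(-2\lambda+\cos\Theta(-s)-\sin\Theta(-s)/X(-s)\big)=2\lambda+\cos\bar\theta(s)-\sin\bar\theta(s)/\bar x(s)$. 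The initial conditions are $\bar x(0)=x_0$, $\bar z(0)=0$, $\bar\theta(0)=\theta_0+2\pi$, which agree with (\ref{112}) up to an integer multiple of $2\pi$; since (\ref{11}) involves $\theta$ only through $\cos\theta$, $\sin\theta$ and $\theta'$, this shift is irrelevant for the functions $x$ and $z$. By uniqueness of solutions of the ODE system, $(\bar x,\bar z)=(x(\cdot;\lambda,\theta_0),z(\cdot;\lambda,\theta_0))$, and unwinding the definitions gives precisely the stated identity.

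The only point requiring care — and the reason this is not a verbatim copy of Prop.~\ref{pr25} — is the choice of symmetry. In the cylindrical case the reflection $\theta\mapsto\pi-\theta$, with $s$ unchanged, suffices, because the right-hand side of the $\theta$-equation in (\ref{1}) depends on $\theta$ only through $\cos\theta$. Here the term $-\sin\theta/x$ breaks this: under $\theta\mapsto\pi-\theta$ one would be forced to have $\bar x'=-\cos\theta$, hence $\bar x=-x+\mathrm{const}$, which is incompatible with $\bar x=x$ (and with $x>0$). Reversing the arc-length parameter $s\mapsto-s$ and simultaneously replacing $\theta$ by $\theta+\pi$ is the combination for which $\cos\theta$, $\sin\theta$ and the curvature term $\theta'$ all pick up the correct signs, so that $(\bar x,\bar z,\bar\theta)$ genuinely reproduces the original system; once this substitution is identified, no real obstacle remains and the verification is routine.
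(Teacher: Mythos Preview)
Your proof is correct and follows exactly the approach of the paper: the paper's own argument consists only of the sentence ``The proof is analogous to Prop.~\ref{pr25} by defining the functions $\bar x(s)=x(-s;-\lambda,\theta_0+\pi)$, $\bar z(s)=z(-s;-\lambda,\theta_0+\pi)$, $\bar\theta(s)=\theta(-s;-\lambda,\theta_0+\pi)+\pi$,'' which are precisely your $\bar x,\bar z,\bar\theta$. Your verification of the three ODEs and your handling of the harmless $2\pi$-shift in the initial angle supply the details the paper omits, and your closing remark on why $\theta\mapsto\pi-\theta$ fails here (because of the $-\sin\theta/x$ term) is a useful clarification not present in the original.
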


 \begin{proof} The proof is analogous to Prop. \ref{pr25} by defining the functions
 \begin{eqnarray*}
\bar{x}(s)&=&x(-s;-\lambda,\theta_0+\pi)\\
\bar{z}(s)&=& z(-s;-\lambda,\theta_0+\pi)\\
\bar{\theta}(s)&=&\theta(-s;-\lambda,\theta_0+\pi)+\pi.
\end{eqnarray*}
 \end{proof}


We come back to the ODE system (\ref{11}). Multiplying the third equation by $x(s)$, we have
$x\theta'=2\lambda x+xx'-z'$ and thus
$$(x\sin\theta)'=x'z'+\cos\theta x\theta'=x'z'+\cos\theta(2\lambda x+xx'-z')=(\lambda x^2)'+xx'^2.$$
If we fix $s_0\in I$, then
\begin{equation}\label{int}
x(s)\sin\theta(s)-\lambda x(s)^2=x(s_0)\sin\theta(s_0)-\lambda x(s_0)^2+\int_{s_0}^sx(t)x'(t)^2 dt.
\end{equation}
We prove that if the graphic of $\alpha$ meets the rotational axis, then this intersection is orthogonal.

\begin{proposition}\label{pr46}
 If the profile curve of a rotational $\lambda$-translating soliton intersects the rotational axis, then it does so at a perpendicular angle.
\end{proposition}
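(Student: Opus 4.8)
The plan is to analyze the behaviour of solutions of the system (\ref{11}) as the $x$-coordinate of the profile curve tends to $0$. We keep the notation of (\ref{11}): the rotational axis is the $z$-axis, $\vec{v}=(0,0,1)$, and $\alpha(s)=(x(s),0,z(s))$ with $x(s)>0$ is the arc-length profile curve, defined on a maximal interval $I$, together with its angle function $\theta$. Since $x>0$ on $I$, if the trace of $\alpha$ meets the axis then this happens as $s$ tends to an endpoint $s_*$ of $I$, with $x(s)\to 0$; since $|z'|\le 1$, also $z(s)\to z_*$ for some $z_*$, so the intersection point is $(0,0,z_*)$. The goal is to prove $\alpha'(s)\to(\pm1,0,0)$, equivalently $\sin\theta(s)\to 0$: the curve reaches the axis horizontally, hence perpendicularly to it.

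The key step is a second-order computation. Put $u(s)=x(s)^2$. From the first equation in (\ref{11}), $u'=2xx'=2x\cos\theta$; differentiating $u'$ once more and substituting the third equation in (\ref{11}) (one may also use (\ref{int})) gives
\[
u''(s)=2-2\,x(s)\sin\theta(s)\bigl(2\lambda+\cos\theta(s)\bigr).
\]
Since $|\sin\theta(2\lambda+\cos\theta)|\le 2|\lambda|+1$ is bounded while $x(s)\to 0$ as $s\to s_*$, we obtain $u'(s)\to 0$ and $u''(s)\to 2$; in particular $u$ extends to a $C^2$ function at $s_*$ with $u(s_*)=u'(s_*)=0$ and $u''(s_*)=2$. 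This also shows $s_*$ is finite: if $s_*=+\infty$, the bound $u''\ge 1$ near $s_*$ would force $u=x^2\to+\infty$, contradicting $x(s)\to 0$.

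Now L'H\^opital's rule (equivalently, a second-order Taylor expansion of $u$ at $s_*$) gives $u'(s)/(s-s_*)\to 2$ and $u(s)/(s-s_*)^2\to 1$ as $s\to s_*$. Since $\cos^2\theta=(x')^2=(u')^2/(4u)$,
\[
\cos^2\theta(s)=\frac{\bigl(u'(s)/(s-s_*)\bigr)^2}{4\,u(s)/(s-s_*)^2}\;\longrightarrow\;1,
\]
so $\sin\theta(s)\to 0$. Hence the unit tangent $\alpha'(s)=(\cos\theta(s),0,\sin\theta(s))$ tends to $(\pm1,0,0)$, which is orthogonal to the $z$-axis; that is, the profile curve meets the rotational axis perpendicularly.

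I expect the main obstacle to be the bookkeeping at the endpoint: justifying that $x(s)\to 0$ (not merely along a subsequence), that $s_*$ is finite, and that $u$ extends $C^2$ up to $s_*$, so that L'H\^opital (or Taylor) is legitimate. The crux is the \emph{geometric-to-analytic} passage --- turning the hypothesis "$x\to 0$'' into "$u''$ is bounded near $s_*$ with limit $2$''; once this is in hand, the conclusion $\sin\theta\to 0$ follows at once from $\cos^2\theta=(u')^2/(4u)$.
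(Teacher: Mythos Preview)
Your argument is correct and takes a genuinely different route from the paper. The paper's proof starts from the first integral (\ref{int}) specialized to $s_0=0$ with $x(0)=0$, divides the resulting identity $x\sin\theta-\lambda x^2=\int_0^s xx'^2\,dt$ by $x$, and applies L'H\^opital to the quotient $\frac{1}{x}\int_0^s xx'^2\,dt$, reading off $\sin\theta(0)=0$ in one line. You instead bypass (\ref{int}) and work with $u=x^2$: the identity $u''=2-2x\sin\theta(2\lambda+\cos\theta)$ shows $u''\to 2$, and then $\cos^2\theta=(u')^2/(4u)\to 1$ via a Taylor expansion of $u$ at $s_*$.

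What each approach buys: the paper's is shorter once (\ref{int}) is in hand, but its L'H\^opital step tacitly needs $x'=\cos\theta\neq 0$ in a punctured neighbourhood of $s=0$, which is part of what one is trying to establish. Your route avoids this circularity by applying L'H\^opital/Taylor to $u$ against $(s-s_*)$ rather than against $x$; it also yields two pleasant byproducts (finiteness of $s_*$ and the $C^2$ extension of $u$). The price is the endpoint bookkeeping you flag---upgrading ``the trace meets the axis'' to ``$x(s)\to 0$ as $s\to s_*$''---which the paper simply sidesteps by assuming the profile curve is already defined at $s=0$ with $x(0)=0$ and $\theta(0)$ well-defined. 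One small ordering issue: your claim ``since $|z'|\le 1$, also $z(s)\to z_*$'' is only justified \emph{after} you have shown $s_*$ is finite, so you should postpone it (or phrase it conditionally) until that step.
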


\begin{proof} Without loss of generality, suppose that the intersection between the curve and the axis occurs at $s=0$. Then  $x(0)=0$ and from (\ref{int}), we have
\begin{equation}\label{xx}
x(s)\sin\theta(s)-\lambda x(s)^2=\int_{0}^sx(t)x'(t)^2 dt.
\end{equation}
We divide this expression by $x(s)$, obtaining
$$\sin\theta(s)-\lambda x(s)=\frac{1}{x(s)}\int_{0}^sx(t)x'(t)^2 dt.$$
Letting $s\rightarrow 0$ and applying the L'H\^{o}pital rule, we obtain
$$
\sin\theta(0)=\frac{x(0) x'(0)^2}{x'(0)}=x(0)x'(0)=0,
$$
and this proves the result.
\end{proof}

We study the existence of solutions of (\ref{11})-(\ref{112}). The local existence is assured if $x(0)>0$. When $x(0)=0$, the third equation in (\ref{11}) presents a singularity and thus the existence is not a direct consequence of the  standard theory. We study this case. By Props. \ref{pr26} and \ref{pr46}, the initial condition for $\theta$ is $\theta(0)=0$. We give a proof of the existence using known techniques of the theory of the radial solutions for an elliptic equation. Here we prefer to write (\ref{eq1}) (or (\ref{11})) as the prescribed mean curvature equation
\begin{equation}\label{rot1}
\frac{u''(r)}{(1+u'(r)^2)^{3/2}}+\frac{u'(r)}{r\sqrt{1+u'(r)^2}}=2H=2\lambda+\frac{1}{\sqrt{1+u'(r)^2}},
\end{equation}
where, as usually, $r$ is the radial variable and $u=u(r)$.
 Multiplying   (\ref{rot1}) by $r$, we want to establish the existence of a classical solution of
\begin{equation}\label{rot-r}
\left\{\begin{array}{ll}
 \left({\displaystyle \frac{r u'(r)}{\sqrt{1+u'(r)^2}}}\right)'=2\lambda r+{\displaystyle \frac{r}{\sqrt{1+u'(r)^2}}},&\mbox{ in $(r_0,r_0+\delta)$}\\
u(r_0)=0, \quad u'(r_0)=0&
\end{array}\right.
\end{equation}
where $r_0\geq 0$. We consider the case $r_0=0$, where    Eq.  (\ref{rot-r}) is degenerate.

\begin{proposition}\label{pr-exi}
The initial value problem (\ref{rot-r}) with $r_0=0$ has a solution $u\in C^2([0,R])$ for some $R>0$ which depends continuously on the initial datum.
\end{proposition}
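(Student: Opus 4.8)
The plan is to recast the degenerate initial value problem as a fixed point problem for an integral operator and to apply a contraction-mapping (or Schauder) argument on a small interval $[0,R]$. First I would integrate the first equation of (\ref{rot-r}) from $0$ to $r$: since the left-hand side is an exact derivative and vanishes at $r=0$ (because the factor $r$ kills the singularity), one gets
\[
\frac{r\,u'(r)}{\sqrt{1+u'(r)^2}}=\lambda r^2+\int_0^r \frac{t}{\sqrt{1+u'(t)^2}}\,dt .
\]
Dividing by $r$ and solving the resulting algebraic relation for $u'(r)$ — note the map $p\mapsto p/\sqrt{1+p^2}$ is a smooth increasing bijection from $\r$ onto $(-1,1)$, with smooth inverse $q\mapsto q/\sqrt{1-q^2}$ — shows that $u'$ must satisfy a fixed point equation of the form $u'(r)=\Phi(r,u')$, where
\[
\Phi(r,w)(r)=\frac{Q_w(r)}{\sqrt{1-Q_w(r)^2}},\qquad Q_w(r)=\lambda r+\frac{1}{r}\int_0^r\frac{t}{\sqrt{1+w(t)^2}}\,dt .
\]
Since $\bigl|\tfrac1r\int_0^r\tfrac{t}{\sqrt{1+w^2}}\,dt\bigr|\le r/2$, we have $|Q_w(r)|\le(|\lambda|+\tfrac12)r$, which stays well below $1$ for $r\in[0,R]$ once $R$ is chosen small; thus $\Phi$ is well defined and maps a closed ball in $C([0,R])$ into itself.

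Next I would verify that $\Phi$ is a contraction on $C([0,R])$ for $R$ small. The only $w$-dependence enters through the integral term in $Q_w$; the integrand $t/\sqrt{1+w(t)^2}$ is Lipschitz in $w$ with a constant bounded by $\tfrac12$ (its $w$-derivative is $-tw/(1+w^2)^{3/2}$, bounded by $t/2$), so $|Q_{w_1}(r)-Q_{w_2}(r)|\le \tfrac{r}{4}\|w_1-w_2\|_\infty$. Composing with the smooth function $q\mapsto q/\sqrt{1-q^2}$, which has bounded derivative on $[-\tfrac12,\tfrac12]$ say, gives $\|\Phi(w_1)-\Phi(w_2)\|_\infty\le C R\,\|w_1-w_2\|_\infty$; taking $R<1/C$ makes $\Phi$ a contraction. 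The Banach fixed point theorem then produces a unique continuous solution $w=u'$ on $[0,R]$; setting $u(r)=\int_0^r w$ gives $u\in C^1$ with $u(0)=u'(0)=0$.

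Then I would bootstrap regularity. From the fixed point equation, $Q_w$ is $C^1$ on $(0,R]$ and, because the integral term behaves like $r/2+O(r^2)$ near $0$, one checks $Q_w(r)=(\lambda+\tfrac12)r+o(r)$ is actually $C^1$ up to $0$ with $Q_w(0)=0$; hence $w=u'$ is $C^1$ on $[0,R]$, so $u\in C^2([0,R])$, and substituting back shows it solves (\ref{rot1}), equivalently (\ref{11}) with $\theta(0)=0$. Continuous dependence on the "initial datum" here means on $\lambda$ (the problem at $r_0=0$ fixes $x(0)=z(0)=0$, $\theta(0)=0$): the operator $\Phi$ depends continuously — indeed smoothly — on $\lambda$ in the sup norm uniformly on the ball, so the standard continuity-of-fixed-points lemma for a uniform family of contractions applies. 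The main obstacle is the degeneracy at $r=0$: one must be careful that dividing by $r$ is legitimate and that the resulting $Q_w$ extends $C^1$-smoothly across $0$; this is exactly why the argument is run at the level of the integrated equation (where the singularity is removable) rather than on the second-order ODE directly, and why the a priori bound $|Q_w(r)|\le(|\lambda|+\tfrac12)r$ is essential to keep the geometry non-degenerate (the surface stays a graph, $|u'|<1$ controlled) on $[0,R]$.
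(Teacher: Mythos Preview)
Your argument is correct and is essentially the paper's own proof: both integrate the equation once to remove the singularity at $r=0$, invert $\varphi(p)=p/\sqrt{1+p^2}$ to obtain an integral fixed-point equation, apply a contraction on a short interval, and then recover $C^2$-regularity at the origin via L'H\^opital (yielding $u''(0)=\lambda+\tfrac12$). The only cosmetic difference is that the paper phrases the fixed point for $u$ in $C^1([0,\delta])$ via the operator $({\mathcal S}u)(r)=a+\int_0^r\varphi^{-1}\!\bigl(\tfrac1s\int_0^s t\,(2\lambda+(1+u'^2)^{-1/2})\,dt\bigr)\,ds$, whereas you work directly with $w=u'$ in $C^0([0,R])$; the two formulations are equivalent.
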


\begin{proof}
Define the functions $g:\r_0^+\times\r\rightarrow\r$ and $\varphi:\r\rightarrow\r$ by
$$g(x,y)=2\lambda+\frac{1}{\sqrt{1+y^2}},\ \varphi(y)=\frac{y}{\sqrt{1+y^2}}.$$
It is clear that a function $u\in C^2([0,\delta])$, for some $\delta>0$, is a solution of (\ref{rot-r}) if and only if   $r g(u,u')=(r\varphi(u'))'$ and $u(0)=0$, $u'(0)=0$.

Fix $\delta>0$ to be determined later and define the operator ${\mathcal S}$ by
$$({\mathcal S}u)(r)=a+\int_0^r\varphi^{-1}\left(\int_0^s\frac{t}{s}\left(2\lambda+\frac{1}{\sqrt{1+u'^2}}\right)dt\right)ds.$$
Then a fixed point of the operator ${\mathcal S}$ is a solution of the initial value problem (\ref{rot-r}). We prove that ${\mathcal S}$ is a contraction in the space $C^1([0,\delta])$ endowed the usual norm $\|u\|=\|u\|_\infty+\|u'\|_\infty$. The functions $g$ and  $\varphi^{-1}$ are  Lipschitz continuous of constant $L>0$ in
$[-\epsilon,\epsilon]\times[-\epsilon,\epsilon]$ and $[-\epsilon,\epsilon]$, respectively provided $\epsilon>0$ and $\epsilon<1$. Then for all $u,v\in\overline{B(0,\epsilon)}$ and for all $r\in [0,\delta]$,
$$|({\mathcal S}u)(r)-({\mathcal S}v)(r)|\leq\frac{L^2}{4} r^2\left( \|u-v\|_\infty+\|u'-v'\|_\infty\right)$$
$$|({\mathcal S}u)'(r)-({\mathcal S}v)'(r)|\leq\frac{L^2}{2} r\left( \|u-v\|_\infty+\|u'-v'\|_\infty\right)$$
Hence choosing $\delta>0$ small enough, we conclude that ${\mathcal S}$ is a contraction in the closed ball $\overline{B(0,\delta)}$ in $C^1([0,\delta])$. Thus the Schauder Point Fixed theorem proves the existence of  a local solution of the initial value problem  (\ref{rot-r}). This solution lies in $C^1([0,\delta])\cap C^2(0,\delta])$ and the $C^2$-regularity up to $0$ is verified directly by  using the L'H\^{o}pital rule: from (\ref{rot1})  we have      $$u''(0)+\lim_{r\rightarrow 0}\frac{u(r)}{r}=2\lambda+1,$$
that is,
$$\lim_{r\rightarrow 0} u''(r)=\lambda+\frac12.$$
The continuous dependence of local solutions on the initial datum  is a consequence of the continuous dependence of the fixed points of ${\mathcal S}$.
 \end{proof}

In a first step of the classification of the rotational $\lambda$-translating solitons, we study   the   solutions  of (\ref{11}) that intersect orthogonally the rotational axis. This means   $x_0=\theta_0=0$ in (\ref{112}).  We write here the third equation of (\ref{11}), namely,
\begin{equation}\label{3}
\theta'(s)=2\lambda+\cos\theta(s)-\frac{\sin\theta(s)}{x(s)}.
\end{equation}
As first observations, we have:
\begin{enumerate}
\item The monotonicity of the angle function $\theta$ close to $s=0$ is given by the value $\theta'(0)$. Equation (\ref{3}) and the L'H\^{o}pital rule gives $\theta'(0)=\lambda+1/2$. Therefore $\theta$ is increasing (resp. decreasing) around $s=0$ if $\lambda>-1/2$ (resp. $\lambda<-1/2$).
\item  If $\lambda>0$, from  (\ref{xx}) we deduce that   $\theta$ does not attain the value $  \pi$. Similarly, the function $\theta$ does not attain again the value $0$ because if $s_1>0$ is the first time where $\theta(s_1)=0$, then $\theta'(s_1)\leq 0$, but (\ref{11}) gives $\theta'(s_1)=2\lambda+1>0$. This contradiction proves  that $\theta$ is a bounded function $0<\theta(s)<\pi$. In particular, the solutions of (\ref{11}) are defined in $(0,\infty)$. Moreover, from (\ref{xx}) again, $x(s)-\lambda x(s)^2>0$ for every $s$ and we deduce that the function $x(s)$ is bounded, namely, $x(s)<1/\lambda$.
\item If $\lambda=0$, then the surface is the bow soliton (\cite{css}).
\item If $\lambda=-1/2$, then it is immediate that the solution of (\ref{11})-(\ref{112}) for $x_0=\theta_0=0$ is $x(s)=s$, $z(s)=0$ and $\theta(s)=0$, that is, $\alpha$ is a horizontal line and the surface is a horizontal plane.
\end{enumerate}
From now we discard the case $\lambda=0$. In order to give a description of the profiles curves,  we do an analytic study of the solutions of (\ref{11})  from the viewpoint of the dynamic system theory. Here we follow a similar study done by Gomes in   \cite{go2} in the classification of the rotational surfaces of spherical type with constant mean curvature in hyperbolic space ${\mathbb H}^3$ (see also \cite{go1} for other types of rotational surfaces in ${\mathbb H}^3$ and  the Euclidean case).  We project   the vector field $(x'(s),z'(s),\theta'(s))$  on the $(\theta,x)$  plane, obtaining the one-parameter plane vector field
$$
 \left\{\begin{array}{lll}
 \theta'(s)&=&\displaystyle2\lambda+\cos\theta(s)-\frac{\sin\theta(s)}{x(s)},\\
  x'(s)&=&  \cos\theta(s).
\end{array}
\right.
$$
Multiplying the vector field $(\theta,x)$  by $x$, which is positive, to eliminate the poles, we conclude that the above system is equivalent to the next autonomous system
\begin{equation}\label{pp}
 \left\{\begin{array}{lll}
 \theta'(s)&=&\displaystyle2\lambda x(s)+ x(s) \cos\theta(s) - \sin\theta(s)\\
 x'(s)&=&  x(s)\cos\theta(s).
\end{array}
\right.
\end{equation}
We study the qualitative properties of the solutions of (\ref{pp}).  By the periodicity of the functions $\sin\theta$ and $\cos\theta$, it suffices to consider  $\theta\in [-\pi,\pi]$. In the region $[-\pi,\pi]\times\{(\theta,x):x\geq 0\}$, the singularities of the vector field $V(\theta,x)=(  2\lambda x+ x \cos\theta - \sin\theta,x\cos\theta)$ are the points $P_1=(0,0)$, $P_2=(\pi,0)$, $P_3=(-\pi,0)$ and, furthermore, the point $Q_1=(\pi/2,1/(2\lambda))$ in case $\lambda>0$, and  the point $Q_2=(-\pi/2,-1/(2\lambda))$ if $\lambda<0$. We study the type of critical point in all these cases. The linearization of $V$ is
$$LV(\theta,x) =\left(\begin{array}{cc}-x\sin\theta-\cos\theta&2\lambda+\cos\theta\\ -x\sin\theta&\cos\theta\end{array}\right),$$
and denote $\mu_1$ and $\mu_2$ the two eigenvalues. The critical points   $P_i$ are hyperbolic  because the eigenvalues of $LV(P_i)$ are real with $\mu_1<0<\mu_2$. For the points $Q_i$, the eigenvalues $\mu_1$ and $\mu_2$  are $(-1\pm\sqrt{1-16\lambda^2})/(4\lambda)$ and the types of singularities appear in Tables \ref{tab1} and \ref{tab2}.
\begin{table}[h]
    \centering
        \begin{tabular}{| l |  l | l| l|}
        \hline
        $Q_1$& $0<\lambda<1/4$  &$\lambda=1/4$ &$\lambda>1/4$ \\
        \hline
        eigenvalues & $\mu_1<\mu_2<0$ & $\mu_1=\mu_2<0$ &$\Re(\mu_i)<0$ \\
         \hline
        type & stable node & stable improper node  & stable spiral point \\

        \hline

        \end{tabular}
    \caption{Types of singularity at $Q_1$}\label{tab1}
\end{table}

\begin{table}[h]
    \centering
        \begin{tabular}{| l |  l | l| l|}
        \hline
        $Q_2$& $-1/4<\lambda<0$  &$\lambda=-1/4$ &$\lambda<-1/4$ \\
        \hline
        eigenvalues & $\mu_1>\mu_2>0$ & $\mu_1=\mu_2>0$ &$\Re(\mu_i)>0$ \\
         \hline
        type & unstable node & unstable  improper node & unstable spiral point \\

        \hline

        \end{tabular}
    \caption{Types of singularity at $Q_2$}\label{tab2}
\end{table}

 We analyze the different cases of rotational surfaces depending on the value of $\lambda$.

\begin{enumerate}
\item  {\it Case $\lambda>1/4$.}

If $\lambda>1/4$, then $\theta'(0)=\lambda+1/2>0$ and this means that $\theta$ is increasing in a neighbourhood of $s=0$.  We know that  the point $Q_1=(\pi/2,1/(2\lambda))$ is a stable spiral point. Therefore, and by the phase portrait (Fig. \ref{fig41}, left), the angle function $\theta$ is increasing in a first moment, next $\theta$ crosses the value $\pi/2$, and next decreases crossing $\pi/2$ again. This behavior  is repeating as $\theta\rightarrow\pi/2$: see Fig. \ref{fig41}, right. On the other hand, the function $x$ is bounded with $x<1/\lambda$ and oscillating around the value  $1/(2\lambda)$ as $s\rightarrow \infty$, being this value its limit. Since $\theta\rightarrow\pi/2$, then $z'(s)=\sin\theta\rightarrow 1$ and the function $z$ increasing towards $\infty$.

 We have proved that the profile curve is an embedded curve converging to the vertical line $x=1/(2\lambda)$ and crossing this line infinitely times.

 \begin{figure}[hbtp]
\begin{center}
\includegraphics[width=.3\textwidth]{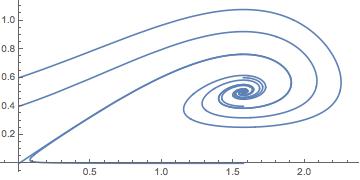}\qquad \includegraphics[width=.08\textwidth]{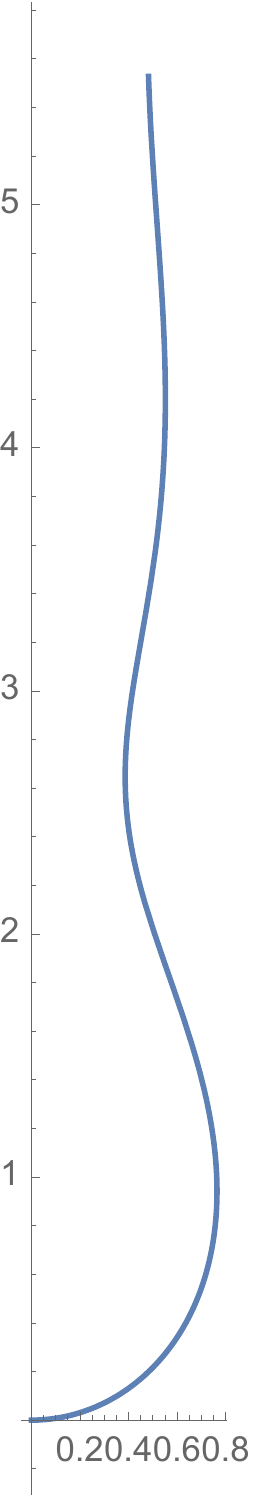}\end{center}
\caption{Rotational  surfaces for $\lambda>1/4$. Here $\lambda=1$. Left: the phase portrait around $Q_1$. Right: the profile curve intersecting    the rotational axis}\label{fig41}
\end{figure}

\item {\it Case $ \lambda=1/4$.}

  The singularity $Q_1$ is a stable improper node. As in the above case, $\theta$ is increasing for $s>0$  and  we have again  $\theta\rightarrow\pi/2$, $x(s)\rightarrow 1/(2\lambda)$ and $z(s)\rightarrow\infty$. The profile curve is embedded converging to the vertical line of equation $x=1/(2\lambda)$.

\item {\it Case $0<\lambda<1/4$.}

The function   $\theta$ is increasing again  in a neighbourhood of $s=0$.    Since $Q_1$ is a stable node,   the function $\theta$ does not attain the value $\pi/2$  (Fig. \ref{fig42}, left).  Then $\theta$ is increasing in its domain: on the contrary, at the first point $s_1$ where $\theta $ decreases, we have $\theta'(s_1)=0$ and $\theta''(s_1)\leq 0$, but 
\begin{equation}\label{segunda}
\theta''(s_1)=-\theta'(s_1)\left(\sin\theta(s_1)+\frac{\cos\theta(s_1)}{x(s_1)}\right)+\frac{\sin\theta(s_1)\cos\theta(s_1)}{x(s_1)^2}>0.
\end{equation}
 Thus $\theta$ is increasing with $\theta\rightarrow\pi/2$.   This proves that $\alpha$ is a graph on the $x$-line with $x(s)$ an increasing function and   $x(s)\rightarrow 1/(2\lambda)$. Since $\theta'(s)>0$, the graph of $\alpha$ is convex on the $x$-interval $[0,1/(2\lambda))$: see Fig. \ref{fig42}, right.

 \begin{figure}[hbtp]
\begin{center}
\includegraphics[width=.2\textwidth]{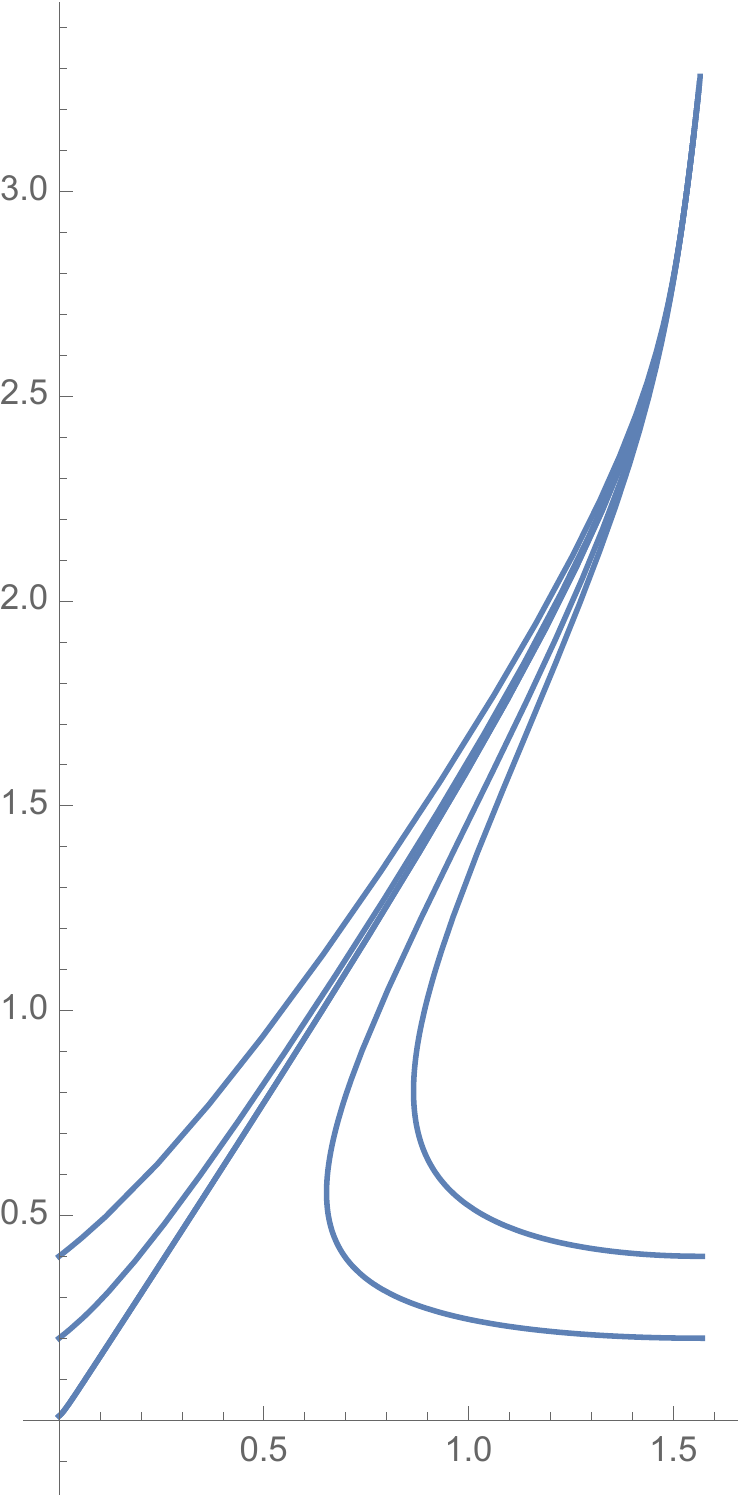}\qquad \includegraphics[width=.2\textwidth]{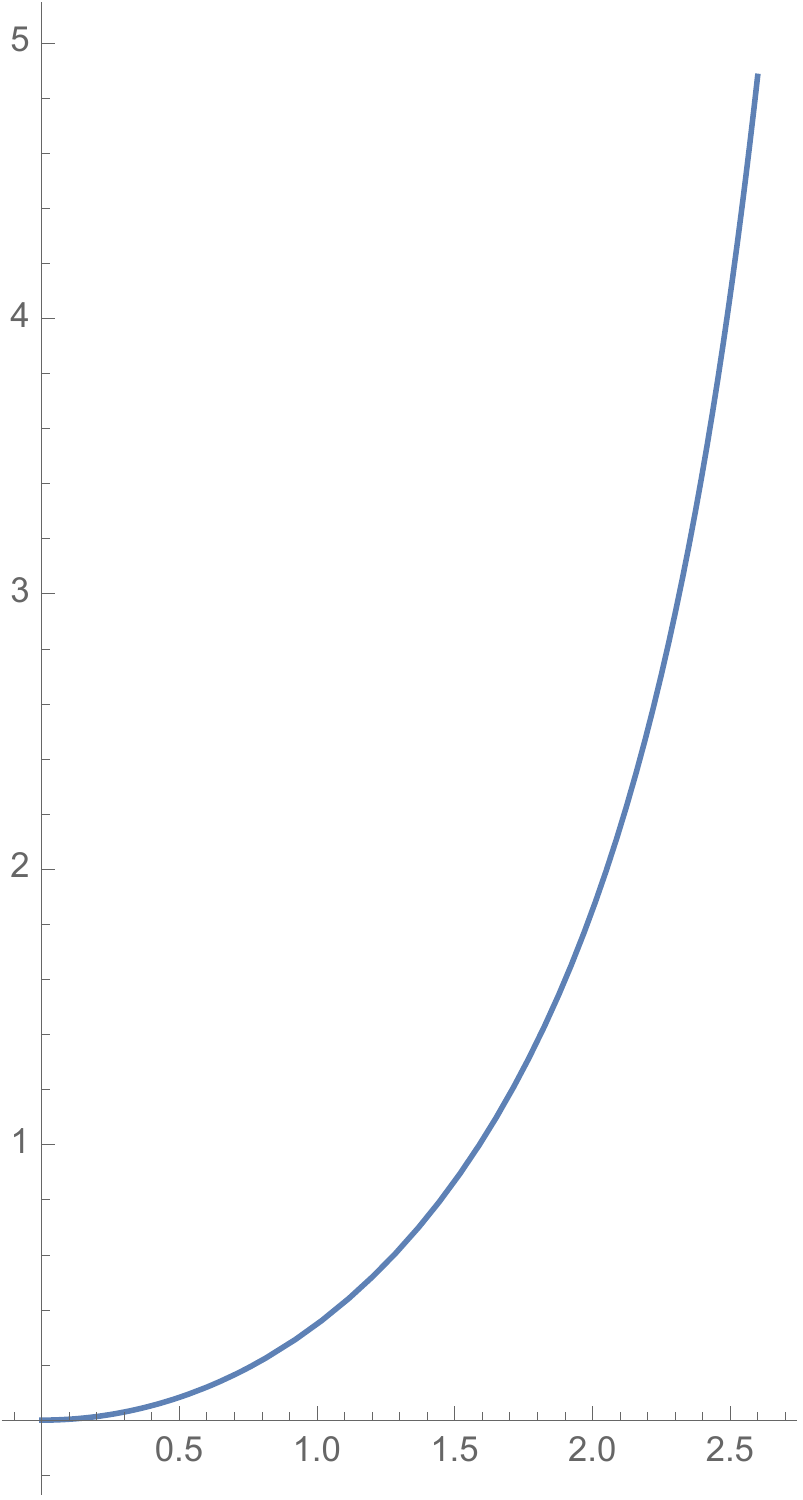}\end{center}
\caption{Rotational  surfaces for $0<\lambda<1/4$. Here $\lambda=0.15$. Left: the phase portrait. Right: the profile curve intersecting    the rotational axis}\label{fig42}
\end{figure}
\item {\it Case $-1/2<\lambda<0$. }

As $\theta'(0)=\lambda+1/2>0$, we know that  $\theta$ is increasing around $s=0$. We prove that $\alpha$ is a graph on the $x$-line. If there exists a first point $s_1>0$ such that $\alpha'(s_1)$ is vertical, then we have two possibilities. If $\theta(s_1)=\pi/2$, then $\theta'(s_1)\geq 0$, but  from (\ref{3}), we have $\theta'(s_1)=2\lambda-1/x(s_1)<0$. This implies that $\theta(s_1)=-\pi/2$ and $\theta'(s_1)\leq 0$, in particular, there exists $s_0\in (0,s_1)$ with $\theta(s_1)\in (0,\pi/2)$ such that
$\theta'(s_0)=0$ and $\theta''(s_0)\leq 0$. However, as in (\ref{segunda}), we have $\theta''(s_1)>0$. This contradiction proves that $\alpha$ is a graph and $\theta$ is as increasing function with $0<\theta(s)<\pi/2$ and since  $\theta'>0$, then  $\alpha$ is  convex. In particular, the singularity     $Q_2=(-\pi/2,-1/(2\lambda))$ is  not attained as  $s\rightarrow\infty$: see Fig. \ref{fig43}, left.

Finally, we show that function $x$ is not bounded (Fig. \ref{fig43}, right). As $x'(s)=\cos\theta(s)>0$, then $x=x(s)$ is increasing. If  $x$ is bounded, then $x'(s)\rightarrow 0$ and $x(s)\rightarrow \bar{x}$ for some positive number $\bar{x}$. Then  $\theta\rightarrow\pi/2$ as $s\rightarrow\infty$.   Letting  $s\rightarrow\infty$  in (\ref{3}),    we get $0=2\lambda-1/\bar{x}$, a contradiction because $\lambda<0$. Definitively, $\alpha$ is a convex graph on $(0,\infty)$.

 \begin{figure}[hbtp]
\begin{center}
\includegraphics[width=.4\textwidth]{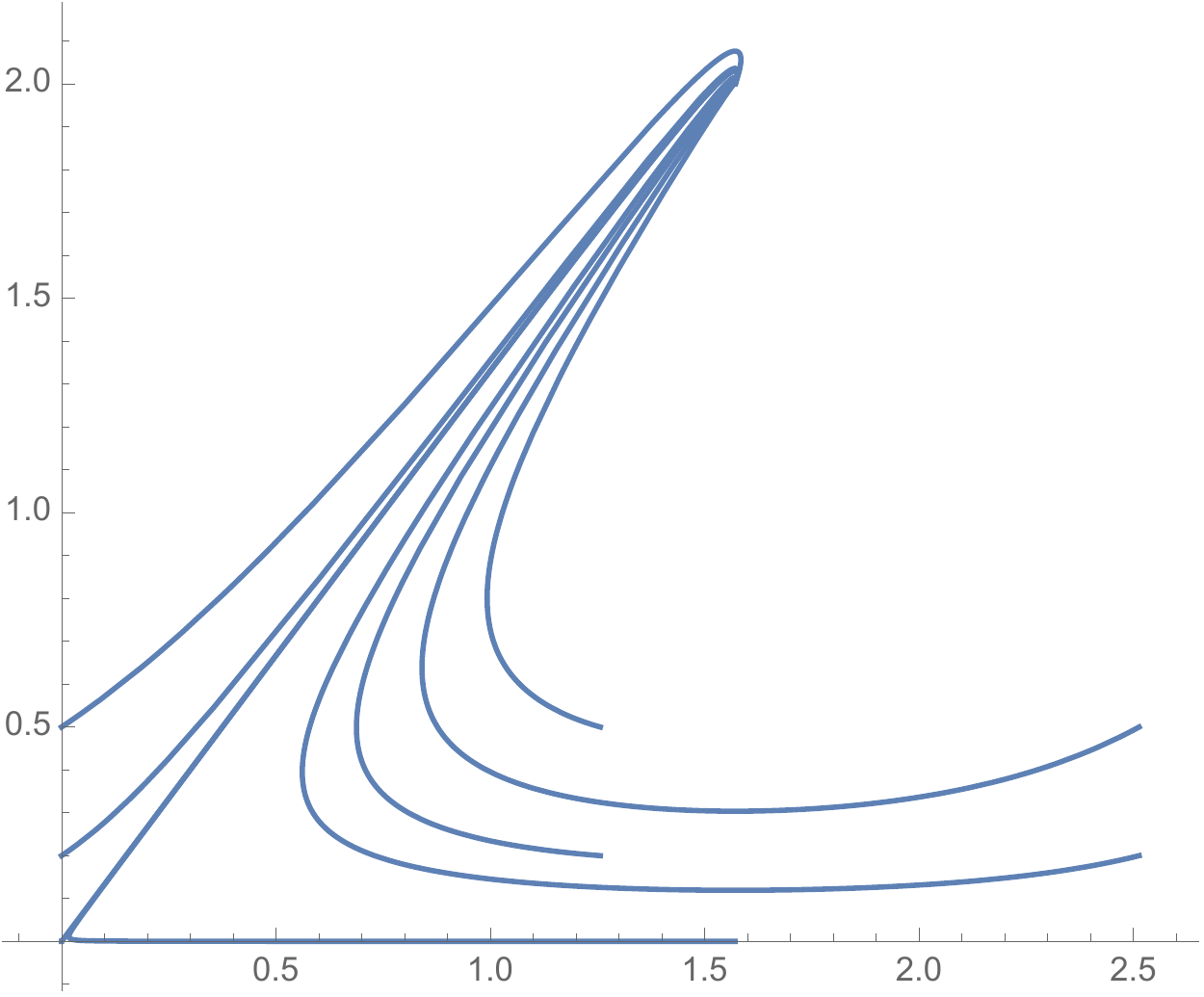}\qquad \includegraphics[width=.4\textwidth]{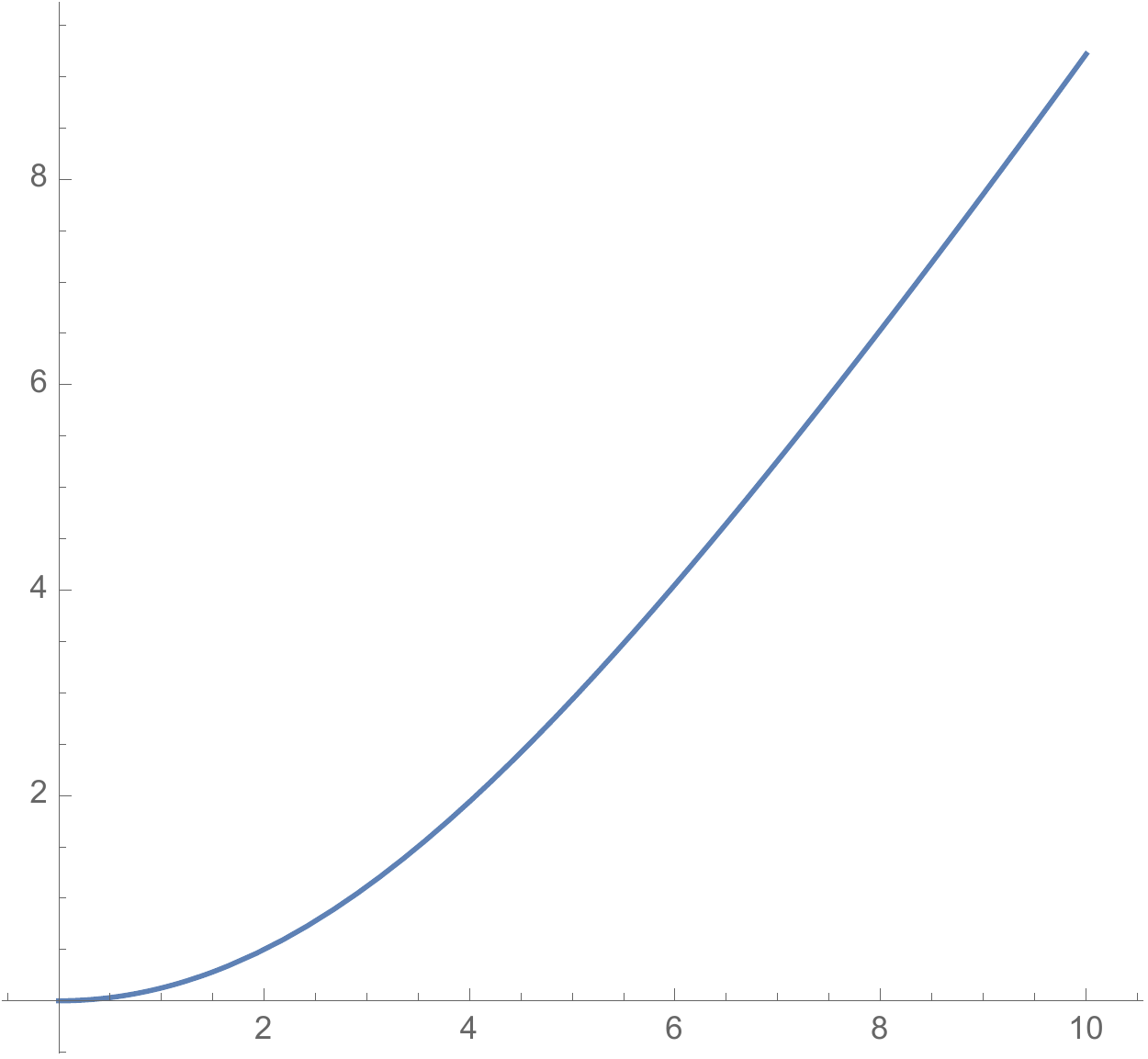}\end{center}
\caption{Rotational  surfaces for $-1/2<\lambda<0$. Here $\lambda=-0.25$. Left: the phase portrait around $Q_2$. Right: the profile curve intersecting    the rotational axis}\label{fig43}
\end{figure}

\item {\it Case $\lambda<-1/2$.}

As $\theta'(0)=2\lambda+1<0$, the function $\theta$ is initially decreasing, in particular, $x$ is increasing. Now the point  $Q_2=(-\pi/2,-1/(2\lambda))$ is an unstable spiral point with respect to it, the trajectories curl anti-clockwise towards infinity  (Fig. \ref{fig44}, left). In particular, we have $\theta(s)\rightarrow -\infty$ and $x(s)\rightarrow\infty$. This implies that $\alpha$ is a curve that self-intersects and curls clockwise to infinity (Fig. \ref{fig44}, right).

 \begin{figure}[hbtp]
\begin{center}
\includegraphics[width=.6\textwidth]{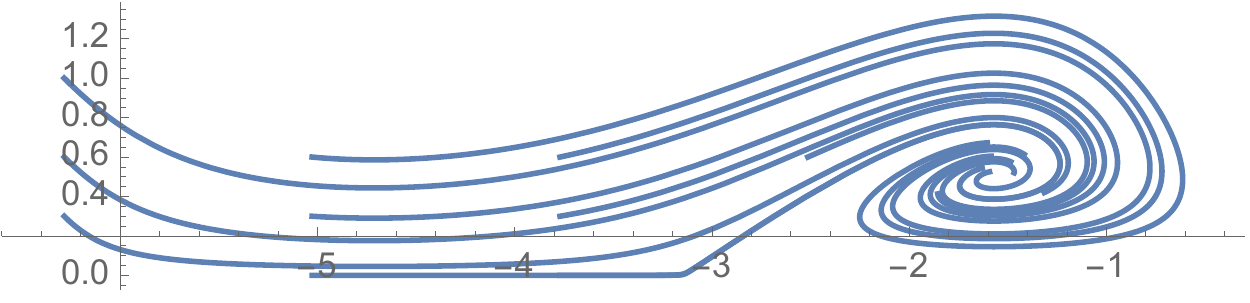}\quad \includegraphics[width=.3\textwidth]{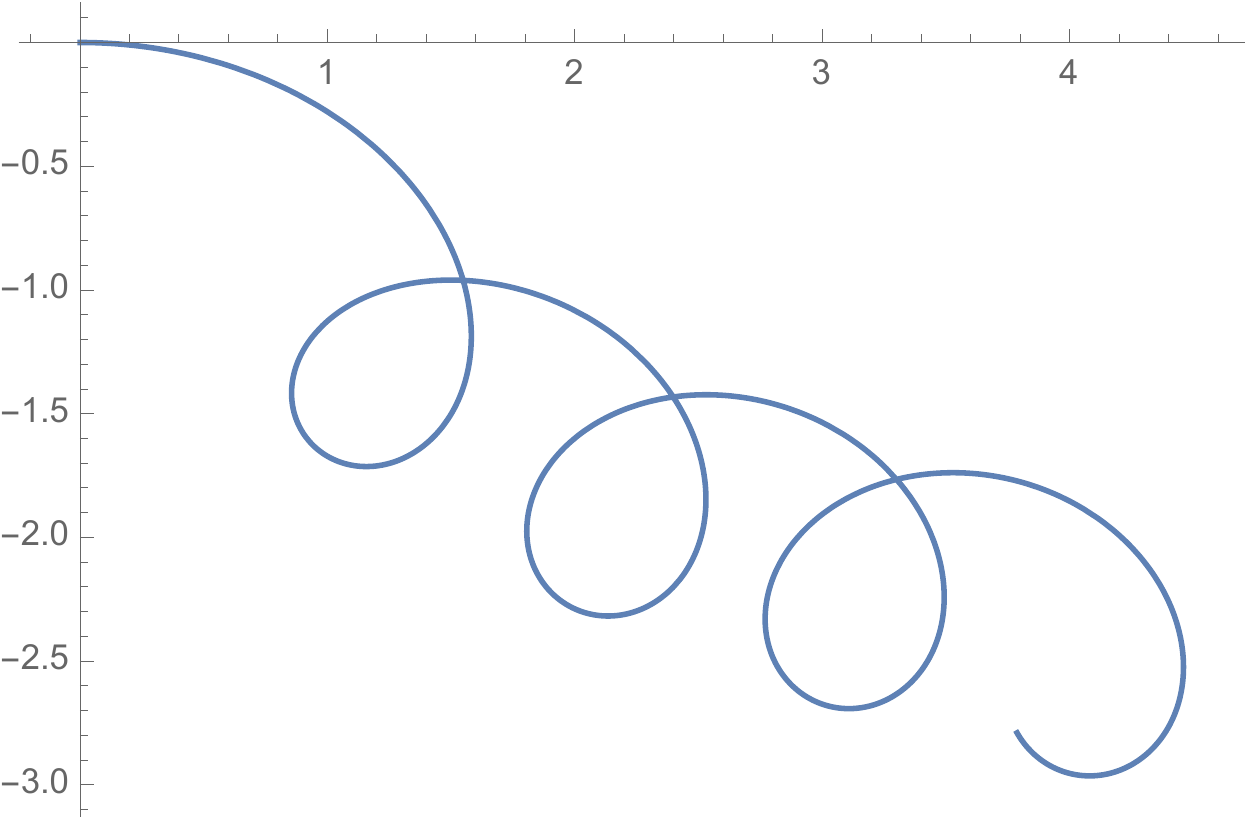}\end{center}
\caption{Rotational  surfaces for $ \lambda<-1/2$. Here $\lambda=-1$. Left: the phase portrait around $Q_2$. Right: the profile curve intersecting    the rotational axis}\label{fig44}
\end{figure}

\end{enumerate}

We summarize in the next result.

\begin{theorem}\label{trot} We have the next classification of rotational $\lambda$-translating solitons that intersect the rotational axis.   Without loss of generality, we suppose that the rotational axis is the $z$-axis and the density vector is $(0,0,1)$. Denote $C_\lambda$ the right vertical cylinder of radius $1/(2|\lambda|)$.

\begin{enumerate}
\item Case $\lambda\geq 1/4$. The surface is embedded and  asymptotic to $C_\lambda$.
\item Case $0<\lambda< 1/4$. The surface is a convex graph on a disc   of radius $1/(2\lambda)$ and asymptotic to $C_\lambda$.
\item Case $\lambda=0$. The surface is the bow soliton.
\item Case $-1/2<\lambda<0$. The surface is a convex entire graph on the $xy$-plane.
\item Case $\lambda=-1/2$. The surface is a horizontal plane.
\item Case $\lambda<-1/2$. The surface has infinity self-intersections.
\end{enumerate}

\end{theorem}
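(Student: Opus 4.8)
The plan is to obtain Theorem~\ref{trot} as the synthesis of the qualitative analysis of \eqref{11}--\eqref{112} carried out above, once existence and the behaviour at the axis are pinned down. First I would invoke Proposition~\ref{pr-exi} to produce, for each $\lambda$, a solution $u\in C^2([0,R])$ of \eqref{rot-r} with $r_0=0$; rewriting this radial graph as an arclength-parametrized profile curve $\alpha(s)=(x(s),0,z(s))$ yields a solution of \eqref{11} with $x(0)=0$, and Proposition~\ref{pr46} forces $\theta(0)=0$, so the only free datum is $\lambda$ itself. Proposition~\ref{pr26} does not reduce the problem to $\lambda\ge 0$ in this sub-problem, so all values of $\lambda$ must be treated. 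Thus the classification is genuinely a statement about the single orbit of the planar field $V(\theta,x)=(2\lambda x+x\cos\theta-\sin\theta,\,x\cos\theta)$ issuing from the singularity $P_1=(0,0)$, together with the degenerate value $\lambda=-1/2$, for which $x(s)=s$, $z\equiv 0$, $\theta\equiv 0$ solves \eqref{11} directly and gives the horizontal plane.

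The core of the argument is the phase-plane study of \eqref{pp}. I would record the three boundary equilibria $P_1,P_2,P_3$ on $\{x=0\}$, all hyperbolic saddles ($\mu_1<0<\mu_2$), and the interior equilibrium $Q_1=(\pi/2,1/(2\lambda))$ when $\lambda>0$, respectively $Q_2=(-\pi/2,-1/(2\lambda))$ when $\lambda<0$, classifying $Q_1,Q_2$ from the eigenvalues $(-1\pm\sqrt{1-16\lambda^2})/(4\lambda)$ as in Tables~\ref{tab1}--\ref{tab2}. The orbit through $P_1$ leaves along the unstable direction with $x$ increasing and $\theta$ increasing or decreasing according to the sign of $\theta'(0)=\lambda+1/2$. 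Then I would argue case by case. For $\lambda>1/4$ the orbit stays in the strip $0<\theta<\pi$, $0<x<1/\lambda$ — the bounds coming from the first integral \eqref{xx} — and spirals into the stable focus $Q_1$; for $\lambda=1/4$ it tends to the stable improper node $Q_1$; for $0<\lambda<1/4$ it is attracted monotonically to the stable node $Q_1$, with monotonicity of $\theta$ (hence convexity, since $\kappa=\theta'$) secured by the second-derivative test \eqref{segunda}, which makes every interior critical point of $\theta$ a strict local minimum. For $-1/2<\lambda<0$ the same use of \eqref{segunda} keeps $\theta$ increasing and in $(0,\pi/2)$, so $\alpha$ is a convex graph, and unboundedness of $x$ is obtained by contradiction: a finite limit $x(s)\to\bar x$ would force $\theta\to\pi/2$, and then \eqref{3} gives $0=2\lambda-1/\bar x$, impossible for $\lambda<0$. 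For $\lambda<-1/2$ the orbit leaves $P_1$ with $\theta$ decreasing, $Q_2$ is an unstable focus, and the orbit winds anticlockwise to infinity, so $\alpha$ spirals and self-intersects infinitely often.

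Finally I would translate the $(\theta,x,z)$-behaviour into the geometric conclusions. In the cases $\lambda\ge 1/4$, $x(s)\to 1/(2\lambda)$ and $z'(s)=\sin\theta(s)\to 1$ (so $z\to\infty$) give a surface asymptotic to $C_\lambda$; embeddedness holds because, although $\theta$ may oscillate, $z$ is strictly increasing, so the profile is a graph over the $z$-axis and the surface meets each horizontal plane in at most one circle. In the cases $0<\lambda<1/4$ and $-1/2<\lambda<0$ the surface is a convex graph over, respectively, the disc of radius $1/(2\lambda)$ and the whole $xy$-plane, and in the case $\lambda<-1/2$ the self-intersections are immediate from $\theta\to-\infty$ and $x\to\infty$. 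The main obstacle, as in any phase-plane classification of this type, is not a single estimate but the bookkeeping needed to exclude spurious turning of $\theta$ and unbounded excursions of $x$ before the orbit settles on its $\omega$-limit set; the device \eqref{segunda} together with the first integral \eqref{xx} is exactly what makes this rigorous, after which the eigenvalue tables dictate whether the approach to the limit is monotone, nodal, or spiralling.
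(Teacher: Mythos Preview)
Your proposal is correct and follows essentially the same route as the paper: invoke existence (Prop.~\ref{pr-exi}) and orthogonality at the axis (Prop.~\ref{pr46}), then carry out the phase-plane classification of the orbit of \eqref{pp} issuing from $P_1$ case by case, using the eigenvalue tables for $Q_1,Q_2$, the first integral \eqref{xx}, and the second-derivative test \eqref{segunda}. Your explicit justification of embeddedness for $\lambda\ge 1/4$ via $z'=\sin\theta>0$ (since $0<\theta<\pi$ by \eqref{xx} and the argument excluding $\theta=0$) is a clean detail the paper leaves implicit.
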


We end this section obtaining the classification of the  rotational $\lambda$-translating solitons that do not intersect the rotational axis. We consider the solution $(x,z,\theta)$ of (\ref{11}) with initial conditions $x(0)=x_0>0$ and $\theta(0)=\theta_0$. Since the solution is also defined for negative values of $s$, by Prop. \ref{pr26} the solution in the interval $(-\infty,0)$ coincides with the solution of (\ref{11}) for the value $-\lambda$ and initial conditions $x(0)=x_0$ and $\theta(0)=\theta_0+\pi$.  As $s\rightarrow\infty$, the behavior of the profile curve is as in Th. \ref{trot} according to the trajectories of  (\ref{pp}) . When $s\rightarrow-\infty$, the singularity $Q_2$ is attained when $\lambda<0$.   In the next result, we distinguish case-by-case depending on the value of $\lambda$, where the first statement refers when $s\rightarrow\infty$ in the profile curve (positive branch)   and the second statement   when $s\rightarrow-\infty$ (negative branch). Thus we deduce from Th. \ref{trot}:

\begin{theorem}\label{trot2}
We have the next classification of rotational $\lambda$-translating solitons that do not intersect the rotational axis.   Without loss of generality, we suppose that the rotational axis is the $z$-axis and the density vector is $(0,0,1)$. Denote $C_\lambda$ the right vertical cylinder of radius $1/(2|\lambda|)$.
\begin{enumerate}
\item For each $\lambda$, the cylinder $C_\lambda$.
\item Case $\lambda> 1/4$. The surface has two ends, one end is embedded and  asymptotic to $C_\lambda$ and the other end has infinity self-intersections.
\item Case $0<\lambda\leq1/4$. The surface has two embedded  ends,  one end is asymptotic to $C_\lambda$ and the other one is a convex   graph on the compliment of a round disc of the $xy$-plane.
\item Case $\lambda=0$. The surface is the winglike-surface translating soliton (\cite{css}).
\item Case $-1/2\leq\lambda<0$. The surface has two embedded ends, one end  is a convex   graph on the compliment of a round disc of the $xy$-plane and the other end is asymptotic to $C_\lambda$.
\item Case $\lambda<-1/2$. The surface has one end with infinity self-intersections and the other end   is  embedded and   asymptotic to $C_\lambda$.
\end{enumerate}

\end{theorem}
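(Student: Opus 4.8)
The plan is to derive Theorem~\ref{trot2} from the asymptotic analysis already carried out for Theorem~\ref{trot}, applied twice — once with parameter $\lambda$ and once with parameter $-\lambda$ — glued through the reflection symmetry of Proposition~\ref{pr26}. Fix a profile curve $\alpha(s)=(x(s),0,z(s))$ solving (\ref{11})--(\ref{112}) with $x_0>0$ and $\theta(0)=\theta_0$, so that the surface $\Sigma$ does not meet the rotation axis. First I would observe that this solution is global in $s$: since $x'=x\cos\theta$ we have $|x'|\le x$, so $x$ cannot blow up in finite time and cannot reach $0$ (the locus $\{x=0\}$ is invariant and our orbit lies in $\{x>0\}$), and then $\theta'$ is bounded in terms of $x$ and the bounded quantities $\cos\theta,\sin\theta$, so the solution extends to all of $\mathbb{R}$. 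In particular $\Sigma$ has exactly two ends, the \emph{positive branch} ($s\to+\infty$) and the \emph{negative branch} ($s\to-\infty$).

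The second step reduces the negative branch to a positive one. By Proposition~\ref{pr26}, for $s<0$ one has $(x(s;\lambda,\theta_0),z(s;\lambda,\theta_0))=(x(-s;-\lambda,\theta_0+\pi),z(-s;-\lambda,\theta_0+\pi))$, so, up to the isometry $s\mapsto -s$, the negative branch of a $\lambda$-soliton is the positive branch of a $(-\lambda)$-soliton which again does not meet the axis. Hence it suffices to classify the single asymptotic type of the positive branch of a non-axis solution for every real $\lambda\neq 0$, and then read off the two ends of $\Sigma$ by applying this classification with parameters $\lambda$ and $-\lambda$. Since the radius of the limit cylinder is $1/(2|\lambda|)=1/(2|-\lambda|)$, whenever both branches are cylinder-asymptotic they are asymptotic to the same $C_\lambda$. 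Item~(1) is separate: the equilibrium of (\ref{pp}) at $Q_1$ (for $\lambda>0$) or $Q_2$ (for $\lambda<0$), namely $x\equiv 1/(2|\lambda|)$ and $\theta\equiv\pm\pi/2$, is precisely $C_\lambda$, as already noted among the rotational solitons generated by straight lines.

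For the positive-branch classification I would repeat the phase-plane study of the autonomous system (\ref{pp}) in the invariant half-plane $\{x>0\}$ exactly as in the proof of Theorem~\ref{trot}, but now with an orbit issuing from an arbitrary point rather than from the separatrix of $P_1$. The tools are the same: the local types of $Q_1$ and $Q_2$ listed in Tables~\ref{tab1} and \ref{tab2}; the monotonicity $x'=x\cos\theta$; the sign computation (\ref{segunda}), which shows $\theta$ has no interior local maximum while $\theta\in(0,\pi/2)$, forcing the monotone pattern and excluding periodic orbits in the bounded regime; the a~priori bound on $x$ obtained from (\ref{int}) when $\lambda>0$; and, for $\lambda<-1/2$, the unstable-spiral structure of $Q_2$ forcing $\theta\to-\infty$ and $x\to\infty$. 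This yields, for the positive branch: for $\lambda>0$, convergence of $(\theta(s),x(s))$ to $Q_1$, hence an end asymptotic to $C_\lambda$, which is embedded because $z'=\sin\theta\to 1$ makes the end eventually a vertical graph, and moreover a convex graph when $0<\lambda<1/4$; for $-1/2\le\lambda<0$, $\theta\to\pi/2$ and $x\to\infty$, i.e. a convex graph over the complement of a round disc; and for $\lambda<-1/2$, an end with infinitely many self-intersections. Feeding these two descriptions (for $\lambda$ and for $-\lambda$) into the gluing of the second step produces all six items; closedness of $\Sigma$ is excluded by Theorem~\ref{tclosed}, and for $\lambda=0$ one recovers the winglike translating soliton of \cite{css}.

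The main obstacle I anticipate is making rigorous the assertion that the positive branch of a non-axis solution behaves as in Theorem~\ref{trot}, i.e. determining the $\omega$-limit set of an arbitrary orbit of (\ref{pp}) in $\{x>0\}$: this requires excluding periodic orbits and analysing unbounded orbits. In the bounded cases ($\lambda>0$) the orbit is eventually trapped, via (\ref{int}), in a compact region $\{0<\theta<\pi,\ 0<x\le M\}$ whose only interior equilibrium is $Q_1$, and Poincar\'e--Bendixson together with the exclusion of periodic orbits from (\ref{segunda}) pins the $\omega$-limit set to $Q_1$. In the unbounded cases ($\lambda\le-1/2$, and through the $(-\lambda)$-reduction also $\lambda\ge 1/2$) one must study the orbit near infinity: when $2|\lambda|>1$ the quantity $2\lambda+\cos\theta$ has constant sign, so $|\theta'|$ grows like $|x|$ and the orbit winds indefinitely, giving the self-intersecting end, whereas when $2|\lambda|\le 1$ the orbit hugs a line $\cos\theta=-2\lambda$ and produces a convex graph over the exterior of a disc. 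A secondary and more routine difficulty is translating these phase-plane conclusions into the stated geometric adjectives (embedded, convex graph over the complement of a round disc, infinitely many self-intersections), which proceeds as in the corresponding parts of Theorem~\ref{trot}.
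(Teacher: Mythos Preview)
Your proposal is correct and follows essentially the same approach as the paper: reduce the negative branch to a positive branch with parameter $-\lambda$ via Proposition~\ref{pr26}, then read off the asymptotics of each branch from the phase-plane analysis underlying Theorem~\ref{trot}. The paper's argument is terser---it simply asserts that as $s\to\infty$ the behavior is ``as in Th.~\ref{trot} according to the trajectories of~(\ref{pp})''---so your added care about global existence and about the $\omega$-limit sets of \emph{arbitrary} orbits (via Poincar\'e--Bendixson and the exclusion of periodic orbits through~(\ref{segunda})) fills in details the paper leaves implicit.
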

Some examples of rotational $\lambda$-translating solitons that do not intersect the rotational axis appear in Fig. \ref{fig45}
\begin{figure}[hbtp]
\begin{center}
\includegraphics[width=.2\textwidth]{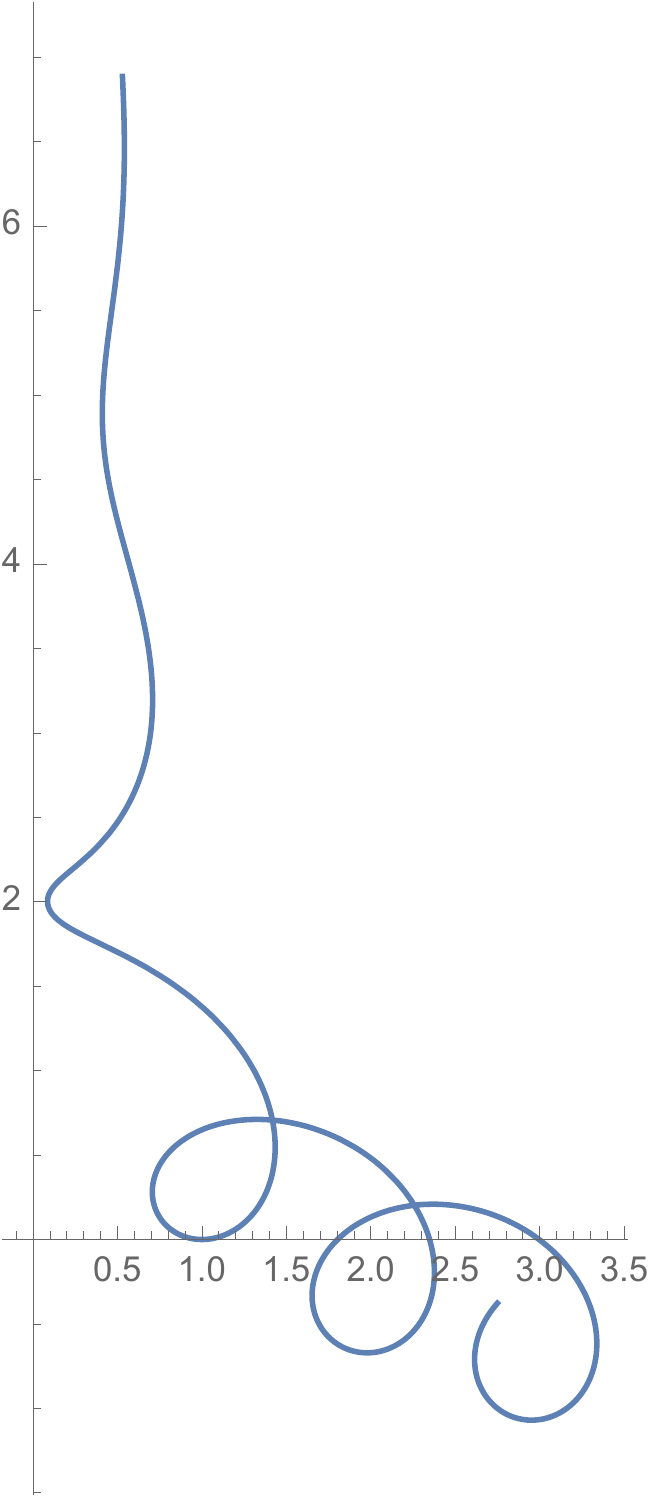}\qquad \includegraphics[width=.3\textwidth]{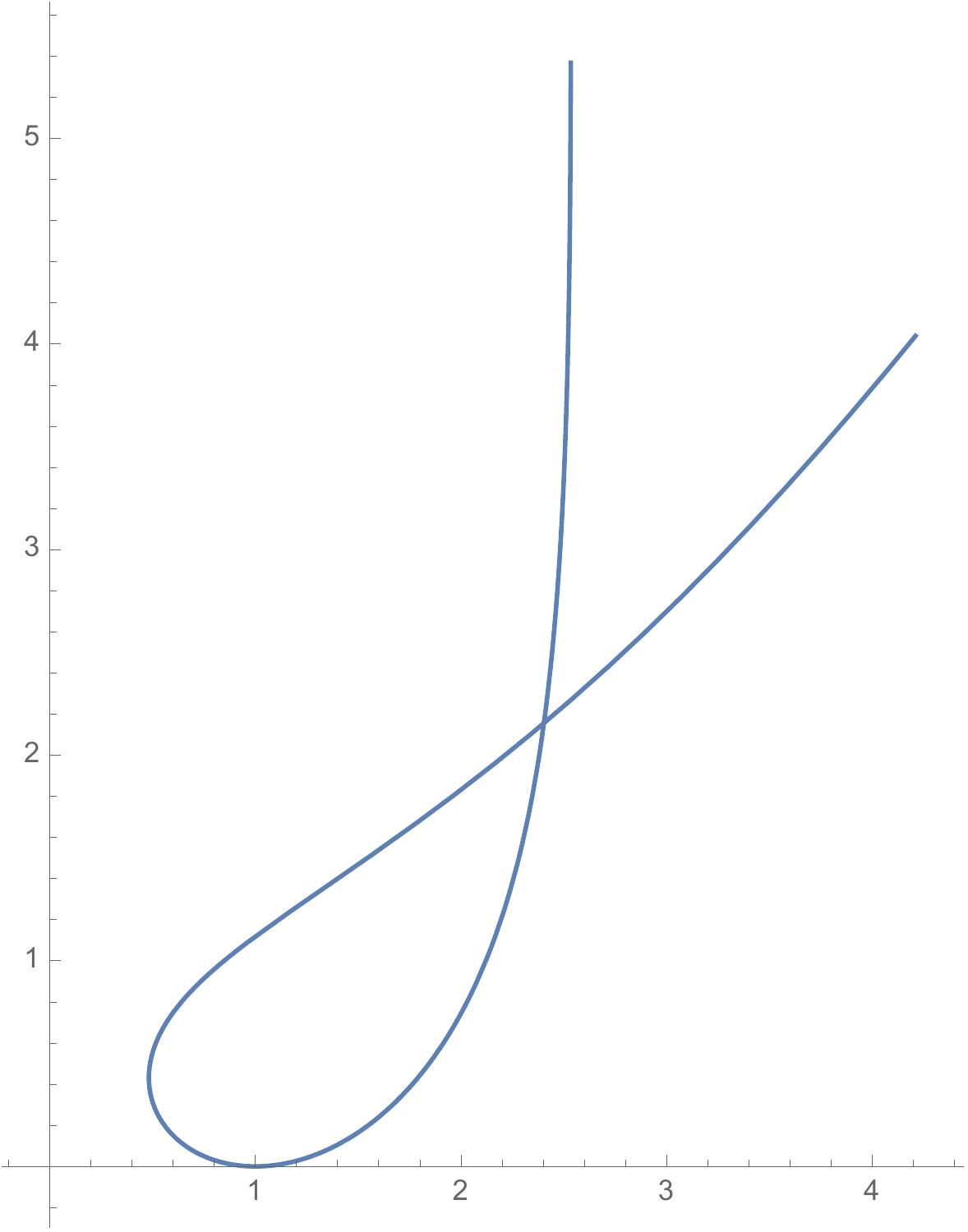}\quad \includegraphics[width=.34\textwidth]{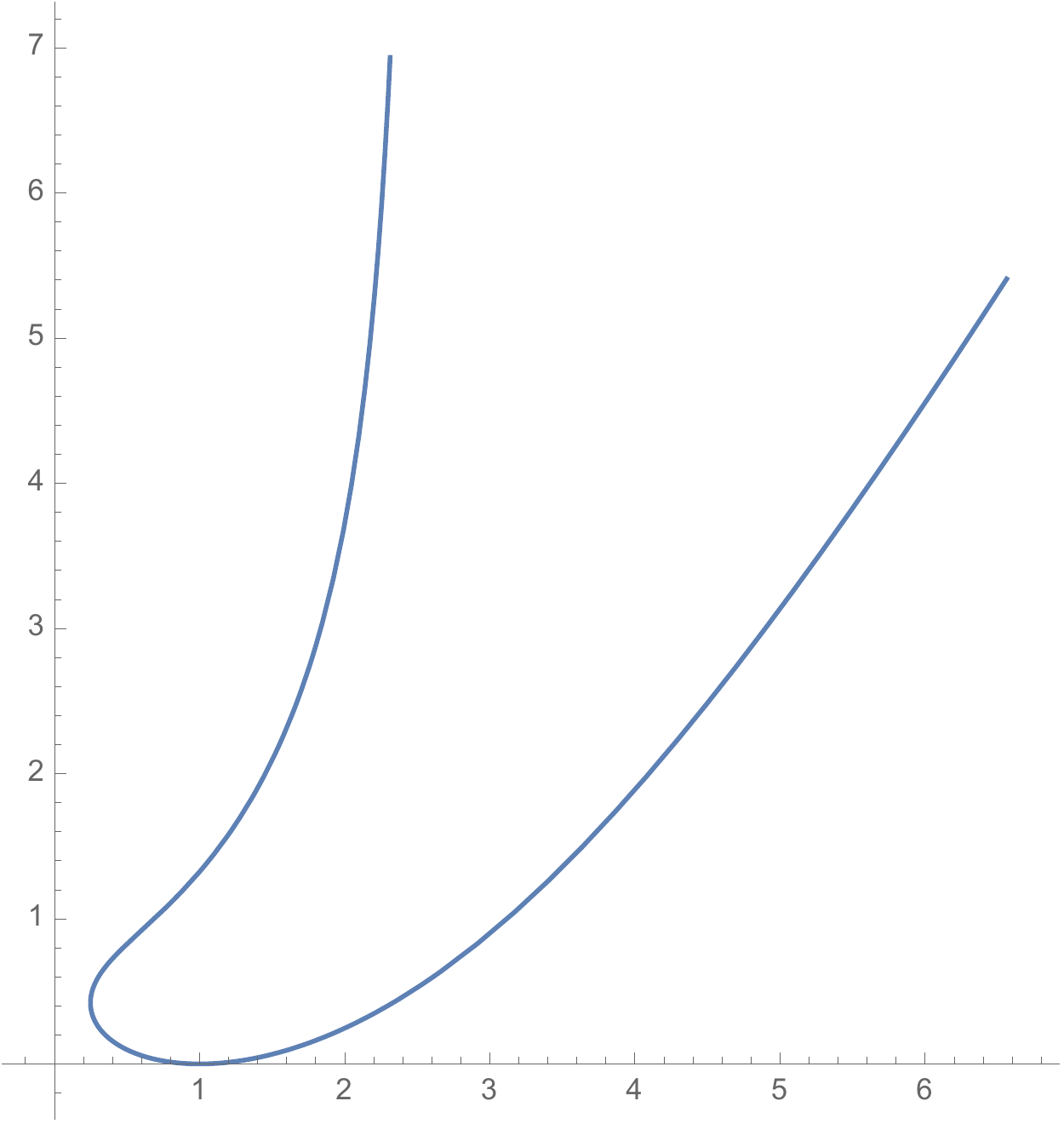}

\end{center}
\caption{Rotational  $\lambda$-translating solitons that do no intersect the rotational axis. Left: $\lambda=1$. Middle: $\lambda=0.2$. Right: An embedded rotational  $1/4$-translating soliton that does not intersect the rotational axis and   the initial condition is $\theta(0)=\pi$}\label{fig45}
\end{figure}

From the above classification we ask   if among these surfaces, and besides the cylinders $C_\lambda$,  there are embedded examples.  We have to consider  $\theta(0)=\theta_0$ inall its generality by taking any value  in an interval of length $2\pi$. If   $\lambda>1/4$ or $\lambda<-1/2$, then it is clear that  the angle function $\theta$ contains the interval $[-\pi,\pi]$, and these examples are not embedded.

If $0<\lambda\leq 1/4$, the negative branch says that $x(s)\rightarrow\infty$. Thus, by letting $s\rightarrow\infty$  in (\ref{3}), we have $0=2\lambda+\cos\theta_1$ with $\theta_1\in (-\pi,-\pi/2)$. When $\theta(0)=0$,  the range of $\theta$ is included in the interval $(\theta_1,\pi)$. If we take $\theta_0=\pi$, then we cover the rest of cases. In the first case, indeed, when $\theta(0)=0$, the positive branch meets the negative one by the range of $\theta$. In contrast,  if $\theta(0)=\pi$, the positive branch converging to the vertical line $x=1/(2\lambda)$ does not intersect the negative branch and we conclude that the profile curve is embedded: see Fig. \ref{fig45}, right.

A similar situation occurs when $-1/2<\lambda<0$ but now the embedded profile curve appears when $\theta(0)=0$. When $\lambda=-1/2$, the initial value $\theta(0)=0$ gives a horizontal plane, which meets the rotational axis, and if $\theta(0)=\pi$, then the profile curve is not embedded.

As a consequence, we have:
\begin{corollary} The only complete embedded rotational $\lambda$-translating solitons that are embedded are the cylinders $C_\lambda$ and some cases when  $-1/2\leq\lambda\leq 1/4$.
\end{corollary}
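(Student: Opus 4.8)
The plan is to read off the conclusion from the classification already obtained in Theorems \ref{trot} and \ref{trot2}, deciding case by case which of the listed surfaces are embedded. To begin, Prop. \ref{pr1} forces the rotational axis to be parallel to the density vector, so after a rigid motion we may take the axis to be the $z$-axis and $\vec{v}=(0,0,1)$; then every rotational $\lambda$-translating soliton is a plane orthogonal to the axis, a cylinder $C_\lambda$, a surface whose profile curve meets the axis (Th. \ref{trot}), or a surface whose profile curve does not meet the axis (Th. \ref{trot2}). Planes and the cylinders $C_\lambda$ are obviously embedded, and $C_\lambda$ occurs for every value of $\lambda$, so the problem reduces to inspecting the two families of profile curves.

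For the surfaces that meet the axis I would quote the description obtained in Th. \ref{trot}. When $\lambda\ge -1/2$ the angle function satisfies $0<\theta(s)<\pi$ for $s>0$, hence $z'(s)=\sin\theta(s)>0$ and the height is strictly monotone; since in these cases the profile curve is either a convex graph over an interval or an embedded curve spiralling monotonically upward toward the line $x=1/(2\lambda)$, each horizontal plane meets the profile exactly once and the surface of revolution is embedded. When $\lambda<-1/2$, Th. \ref{trot} shows the profile curve has infinitely many self-intersections, so the surface is not embedded.

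The real work is with the surfaces that do not meet the axis, where the initial angle $\theta_0$ must be allowed to range over an interval of length $2\pi$. Here I would argue exactly as in the discussion preceding the statement: by Prop. \ref{pr26} the negative branch of the profile curve is governed by the parameter pair $(-\lambda,\theta_0+\pi)$, so the surface is embedded precisely when neither branch self-intersects and the two branches are disjoint. If $\lambda>1/4$ or $\lambda<-1/2$, the spiralling around $Q_1$ or $Q_2$ makes $\theta$ sweep the whole interval $[-\pi,\pi]$ on at least one branch, producing a self-intersection, so such $\lambda$ are excluded. If $0<\lambda\le 1/4$ one compares the two embedded ends provided by Th. \ref{trot2}: for $\theta_0=0$ the ranges of $\theta$ on the two branches overlap and the branches cross, whereas for $\theta_0=\pi$ the end asymptotic to $C_\lambda$ lies strictly on one side of the exterior graph end and the profile curve is embedded. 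The substitution $\lambda\mapsto-\lambda$ together with Prop. \ref{pr26} transfers this analysis to $-1/2<\lambda<0$ with the roles of $\theta_0=0$ and $\theta_0=\pi$ interchanged, and the endpoint $\lambda=-1/2$ is checked by hand (for $\theta_0=0$ one gets the horizontal plane, which meets the axis, while $\theta_0=\pi$ gives a non-embedded curve). Collecting the outcomes of Theorems \ref{trot} and \ref{trot2} with these observations yields the stated list. The \emph{main obstacle} is this last step: controlling, as $\theta_0$ varies, the relative position in the half-plane $\{x>0\}$ of the two branches of a non-axis-meeting profile curve, for which the phase portrait of (\ref{pp}) and the monotonicity properties of $\theta$ and $x$ established in Section \ref{srot} are the essential tools.
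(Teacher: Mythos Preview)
Your treatment of the non-axis-meeting surfaces is essentially the paper's own argument: the discussion immediately preceding the corollary excludes $\lambda>1/4$ and $\lambda<-1/2$ because the spiral behaviour forces $\theta$ through a full interval of length $2\pi$, and then checks the two representative initial angles $\theta_0=0$ and $\theta_0=\pi$ for $0<\lambda\le 1/4$ (transferring to $-1/2<\lambda<0$ via Prop.~\ref{pr26}) exactly as you do.

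There is, however, an internal inconsistency in your write-up. You add a paragraph on the axis-meeting surfaces and correctly extract from Th.~\ref{trot} that those are embedded for \emph{every} $\lambda\ge -1/2$ (in particular for all $\lambda>1/4$: case~(1) of Th.~\ref{trot} explicitly says the surface is embedded). But then ``collecting the outcomes of Theorems~\ref{trot} and~\ref{trot2}'' cannot possibly give the range $-1/2\le\lambda\le 1/4$ of the corollary; your own analysis produces embedded examples outside that range. The resolution is that the corollary, read in the context in which it appears, refers only to the surfaces of Th.~\ref{trot2}, i.e.\ those that do not intersect the rotational axis: the paragraph introducing it begins ``From the above classification we ask if among \emph{these} surfaces, and besides the cylinders $C_\lambda$, there are embedded examples''. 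So you should drop the axis-meeting paragraph (or point out that it lies outside the scope of the corollary); otherwise your proof does not establish the statement as written.
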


\section{A further result on rotational surfaces}\label{sriemann}

A generalization of  rotational surfaces are the  surfaces foliated by a one-parameter family of circles contained parallel planes: in case that the curve of the centers of the circles is a straight line orthogonal to each plane, then the surface is rotational. Our motivation in this section comes from the theory of minimal surfaces in Euclidean space where it is known that besides the rotational surfaces (plane and catenoid),  there exists a family of non-rotational minimal surfaces    foliated by circles in parallel planes, called in the literature the Riemann minimal examples (\cite{ri}). In this section we investigate the corresponding  problem for  $\lambda$-translating solitons. In view of Prop.\ref{pr1} we assume that the foliation planes are orthogonal to the density vector.

\begin{theorem} \label{t51}
If $\Sigma$ is a $\lambda$-translating soliton parametrized by a one-parameter family of circles contained in planes orthogonal to the density vector, then $\Sigma$ is a surface of revolution. \end{theorem}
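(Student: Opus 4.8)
The plan is to set up coordinates so that the density vector is $\vec{v}=(0,0,1)$ and the foliation planes are the horizontal planes $z=\text{const}$. A surface foliated by circles in these planes admits a parametrization of the form
$$X(u,t)=\bigl(a(u)+r(u)\cos t,\ b(u)+r(u)\sin t,\ u\bigr),$$
where $(a(u),b(u))$ is the center of the circle in the plane $z=u$ and $r(u)>0$ is its radius. The goal is to show that $a$ and $b$ must be constant, so that $\Sigma$ is rotational about a vertical axis. First I would compute the mean curvature $H$ and the quantity $\langle N,\vec v\rangle$ for this parametrization; both are smooth functions of $(u,t)$, and after clearing denominators equation (\ref{eq1}) becomes a polynomial identity in $\cos t$ and $\sin t$ whose coefficients are expressions in $u$ built from $a,b,r$ and their first two derivatives.

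The key step is then to exploit the linear independence over the ring of functions of $u$ of the trigonometric monomials $\{1,\cos t,\sin t,\cos 2t,\sin 2t,\dots\}$ appearing after clearing denominators. Because the second fundamental form and the normal involve $\sqrt{E G-F^2}$, the cleanest route is probably to write $2H=\langle N,\vec v\rangle+2\lambda$ as $\langle N,\vec v\rangle$ being a rational trigonometric expression and $2H$ another one, put everything over the common denominator $(EG-F^2)^{3/2}$, and split into the part that is a rational function of $\cos t,\sin t$ and the part carrying a lone factor $(EG-F^2)^{1/2}$; squaring to remove that last radical, one gets a genuine polynomial identity $\sum_k (A_k(u)\cos kt+B_k(u)\sin kt)=0$. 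Vanishing of each $A_k,B_k$ produces an overdetermined ODE system for $a,b,r$. I expect that the top-degree harmonics force $a'(u)=b'(u)=0$ directly (these are the terms that would survive only if the centers move), which is exactly the statement that the curve of centers is the vertical line and $\Sigma$ is a surface of revolution; the radius function $r(u)$ then remains free and is governed by (\ref{11}), consistent with Theorems \ref{trot} and \ref{trot2}.

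The main obstacle is the bookkeeping: the appearance of the factor $(EG-F^2)^{1/2}=\bigl((r')^2+r^2+(a'\cos t+b'\sin t+ \text{cross terms})\bigr)^{1/2}$ means the ``polynomial in $\cos t,\sin t$'' is obtained only after an isolate-and-square manoeuvre, and one must check that this squaring does not conflate genuinely distinct harmonics or introduce spurious solutions — so I would keep careful track of which coefficient equations come from before versus after squaring. A secondary subtlety is that at points where $a'=b'=0$ already holds the identity degenerates; but since we only want to conclude $a'\equiv b'\equiv 0$, it suffices to argue that on any open set where $(a')^2+(b')^2>0$ the highest harmonic coefficient is a nonzero multiple of a power of $(a')^2+(b')^2$ (and of $r$, which is positive), giving a contradiction, hence $a'\equiv b'\equiv 0$ on all of the parameter interval by continuity. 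Once the centers are fixed, a translation puts the axis on the $z$-axis and $\Sigma$ is rotational, completing the proof.
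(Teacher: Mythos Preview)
Your strategy is exactly the paper's: same parametrization, same isolate-and-square step to clear the factor $W^{1/2}=(EG-F^2)^{1/2}$, same expansion as a trigonometric polynomial $\sum_{n}\bigl(A_n(u)\cos nt+B_n(u)\sin nt\bigr)=0$, and appeal to linear independence of the harmonics. The argument does go through along these lines, but your expectation about what the top harmonics deliver is too optimistic, and this is where the real work sits.

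First, even for generic $\lambda$ the highest-order coefficients $A_6$, $B_6$ are \emph{not} nonzero multiples of a power of $(a')^2+(b')^2$. In the paper one finds
\[
A_6=-\tfrac{1}{32}(4\lambda^2-1)\,r^6\bigl(a'^6-b'^6-15a'^4b'^2+15a'^2b'^4\bigr),
\]
\[
B_6=-\tfrac{1}{16}(4\lambda^2-1)\,r^6\,a'b'\bigl(3a'^4+3b'^4-10a'^2b'^2\bigr),
\]
and $A_6=0$ by itself admits nontrivial solutions (for instance $a'^2=b'^2$). One must combine $A_6=0$ with $B_6=0$ and carry out an elimination to reach a contradiction; it is not a single coefficient that does the job.

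Second, and more seriously, both $A_6$ and $B_6$ carry the factor $(4\lambda^2-1)$, so for $\lambda=\pm\tfrac12$ the degree-six harmonics vanish identically and give no information. In that case the paper drops to $A_4,B_4$ and must split further into the subcases $r'\equiv 0$ and $r'\not\equiv 0$; in each subcase it descends to $A_3,B_3$ (and even $A_2$) before a contradiction finally appears. So the proof is not ``the top harmonic kills it'' but a layered case analysis with several rounds of elimination. Your outline is the right scaffold, but it is not complete until the exceptional values $\lambda^2=\tfrac14$ are handled separately.
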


\begin{proof} 	
	After a change of coordinates, we suppose that the density vector is $e_3=(0,0,1)$. Without loss of generality, we suppose that the curve of centers of circles is a graph on the $z$-axis, namely,  $s\rightarrow (a(s),b(s),s)$, $a,b\in C^\infty(\r)$, $I\subset\r$. Then a parametrization of $\Sigma$ is
$$X(s,t)=(a(s),b(s),s)+r(s)(\cos(t),\sin(t),0), \quad s\in I, t\in\r,$$
where $r(s)>0$. We observe that if the functions $a$ and $b$ are both constant, then $\Sigma$ is a surface of revolution about an axis parallel to $e_3$. We compute (\ref{eq1}) with the above parametrization $X(s,t)$. Let $\{E,F,G\}$ denote the first fundamental form of $\Sigma$ in coordinates with respect to $X$. If $(u,v,w)$ stands for the determinant of three vectors $u,v,w\in\r^3$, then Eq.  (\ref{eq1}) is
\begin{equation}\label{zz}
\frac{Z}{W^{3/2}}=2\lambda+\frac{(X_s,X_t,e_3)}{W^{1/2}},
\end{equation}
where $W=EG-F^2$ and
$$Z=E(X_s,X_t,X_{tt})-2F(X_s,X_t,X_{st})+G(X_s,X_t,X_{ss}).$$
We write (\ref{zz}) as
$$P=(Z-(X_s,X_t,e_3)W)^2-4\lambda^2 W^3=0,$$
and  $P$ is an expression of type
$$P(s,t)=\sum_{n=0}^6 A_n(s)\cos(nt)+B_n(s)\sin(nt),$$
for certain   functions $A_n, B_n\in C^\infty (I)$. Since the functions $\{\cos(nt),\sin(nt);0\leq n\leq 6\}$ are linearly independent, we conclude that $A_n=B_n=0$ for all $0\leq n\leq 6$.

The proof of Th. \ref{t51} is by contradiction. Suppose then that $\Sigma$ is not a surface of revolution, which means that the curve of centers is not a straight line. Thus there exists a subinterval of $I$,   which we rename by $I$ again, where $a'\not=0$ or $b'\not=0$. Without loss of generality, we suppose $b'\not=0$. The computation of $A_6$ and $B_6$ gives
$$A_6(s)=-\frac{1}{32} \left(4 \lambda ^2-1\right) r^6 \left(a'^6-b'^6-15 a'^4 b'^2+15 a'^2 b'^4\right),$$
$$B_6(s)=-\frac{1}{16} \left(4 \lambda ^2-1\right) r^6 a' b' \left(3 a'^4+3 b'^4-10 a'^2 b'^2\right).$$
We distinguish two cases:

\begin{enumerate}
\item Case $\lambda^2\neq 1/4$. Then $A_6=0$ is the equation $a'^6-b'^6-15 a'^4 b'^2+15 a'^2 b'^4=0$. By solving  for $a'^2$, we have $a'^2=b'^2$ or $a'^2=(7\pm 4\sqrt{3})b'^2$. Thus $a'=\pm mb'$ with $m^2=(7\pm 4\sqrt{3})$. By substituting into $B_6=0$, this equation reduces into $(3m^4-10m^2+4)b'^4=0$, a contradiction because $b'\not=0$ and the value of $m^2$.

\item Case $\lambda^2=1/4$. The first non trivial coefficients of $P$ are $A_4$ and $B_4$.
\begin{enumerate}
\item Sub-case that $r$ is a constant function in $I$, that is, $r'=0$. Then the linear combination of $A_4$ and $B_4$ given by $(a'^3-3a'b'^2)A_4-(b'^3-3a'^2b')B_4=0$ simplifies into $(a'^2+b'^2)^3(a'-2a'')=0$. Then
$a''=a'/2$, obtaining $a(s)=m_1e^{s/2}+n_1$, $m_1,n_1\in\r$. Then $A_4=0$ and $B_4=0$ are now
$$(b'-2b'')(3m_1^2e^s-4b'^2)=0,\quad (b'-2b'')(m_1^2 e^s-12b'^2)=0.$$
Since $b'\not=0$,  we deduce $b'-2b''=0$, hence $b(s)=m_2 e^{s/2}+n_2$, $m_2\not=0$. Then $A_3=0$ and $B_3=0$ write as
$$m_1(m_1^2-3m_2^2)(4+e^s(m_1^2+m_2^2)=0,$$
$$m_2(-3m_1^2+m_2^2)(4+e^s(m_1^2+m_2^2)=0.$$
We conclude $m_1=m_2=0$, that is, $b'=0$,  a contradiction.
\item Sub-case $r'\not=0$. After a computation, the non trivial linear combination 
$$  4   a' \left(b'^3-a'^2 b'\right)A_4+ (-6 a'^2 b'^2+a'^4+b'^4)B_4=0$$
 simplifies into
 $$r^6(a'^2+b'^2)^3(a'b''-a''b')=0.$$
 If $a'\not=0$ at some point $s_0\in I$, then $a'b''-a''b'=0$ and thus $a'(s)=mb'(s)$ for some constant $m>0$. Using $b'\not=0$, the equations $A_4=0$ and $B_4=0$ now yield
 $$(m^4-6m^2+1)(2rb''-b'(r+4r'))=0,\quad (m^2-1)(2rb''-b'(r+4r'))=0.$$
 Hence $2rb''-b'(r+4r')=0$. Then $b''=b'(r+4r')/(2r)$ and putting into $A_3=0$ and $A_2=0$, we have
 $$q_1:=2 \left(m^2+1\right) b'^2+2 r'^2+r \left(r'-2 r''\right)+2=0,$$
 $$q_2:=8 r' \left(\left(m^2+1\right) b'^2+r'^2+1\right)+r \left(4 r' \left(r'-2 r''\right)+1\right)=0.$$
 Finally, the linear combination $4r'q_1-q_2=0$ writes simply as  $r=0$, a contradiction. This contradiction proves $a'=0$ in $I$. In such a case, $A_4=r^5b'^3(2rb''-b'(r+4r'))/8$ and thus $2rb''=b'(r+4r')$. A similar argument as above concludes that $4r'q_1-q_2=r=0$, a contradiction

\end{enumerate}
\end{enumerate}

\end{proof}

\end{document}